\documentclass[12pt]{amsart}
\usepackage{amsmath,amsfonts,amsbsy,amsgen,amscd,mathrsfs,amssymb,amsthm}
\usepackage[usenames,dvipsnames]{xcolor}
\usepackage[colorlinks=true,citecolor=red,linkcolor=blue]{hyperref}
\usepackage{geometry}
\geometry{a4paper,left=3cm,right=3cm,top=2.5cm,bottom=2.5cm}

\usepackage{setspace}
\setstretch{1.0}

\allowdisplaybreaks[4]
\numberwithin{equation}{section}
\newtheorem{theorem}{Theorem}[section]
\newtheorem{definition}[theorem]{Definition}
\newtheorem{lemma}[theorem]{Lemma}
\newtheorem{corollary}[theorem]{Corollary}

\newtheorem{proposition}[theorem]{Proposition}

\newtheorem{problem}[theorem]{Problem}

\title[$L_p$ Gauss dual Minkowski problem]
{The $L_p$ Gauss dual Minkowski problem}

\author{Na Fu}
\address{Na Fu \newline \indent School of Science, Lanzhou University of Technology, Lanzhou, 730050, China}
\curraddr{}
\email{funa930323@163.com}
\thanks{}

\author{Jianping Sun}
\address{Jianping Sun \newline \indent School of Science, Lanzhou University of Technology, Lanzhou, 730050, China}
\curraddr{}
\email{jpsun@lut.edu.cn}
\thanks{Research is supported by the National Natural Science Foundation of China (No. 11661049)}
\thanks{Corresponding author: Jianping Sun}

%    General info
\subjclass[2020]{52A20; 52A40; 35K96}

%\date{\today}
\keywords{$L_p$-Gauss dual Minkowski problem; Monge-Amp\`{e}re equation; Gauss curvature flow}

\begin{document}
\begin{abstract}
  This article introduces the $L_p$-Gauss dual curvature measure and proposes its related $L_p$-Gauss dual Minkowski problem as: for $p,q\in\mathbb{R}$,
  under what necessary and/or sufficient condition on a non-zero finite Borel measure $\mu$ on unit sphere does there exist a convex body $K$ such that $\mu$ is the $L_p$ Gauss dual curvature measure? If $K$ exists, to what extent is it unique?
  This problem amounts to solving a class of Monge-Amp\`{e}re type equations on unit sphere in smooth case:
  \begin{align}\label{0.1}
  e^{-\frac{|\nabla h_K|^2+h_K^2}{2}}h_K^{1-p}
  (|\nabla h_K|^2+h_K^2)^{\frac{q-n}{2}}
  \det(\nabla^2h_K+h_KI)=f,
  \end{align}
  where $f$ is a given positive smooth function on unit sphere, $h_K$ is the support function of convex body $K$, $\nabla h_K$ and $\nabla^2h_K$ are the gradient and Hessian of $h_K$ on unit sphere with respect to an orthonormal basis, and $I$ is the identity matrix.

  We confirm the existence of convex body solution to the new problem with $p,q>0$ and the existence of smooth solution to the equation (\ref{0.1}) with $p ,q\in\mathbb{R}$ by variational method and Gaussian curvature flow method, respectively.
  Furthermore, the uniqueness of solution to the equation (\ref{0.1}) in the case $p,q\in\mathbb{R}$ with $q<p$ is established.
\end{abstract}

\maketitle

\vskip 20pt
\section{Introduction }
The Brunn-Minkowski type theories (such as classical Brunn-Minkowski theory, $L_p$-Brunn-Minkowski theory, Orlicz Brunn-Minkowski theory and their dual theories) of convex bodies (compact convex sets with nonempty interiors) in $n$-dimensional Euclidean spaces $\mathbb{R}^n$ play an important role in the study of convex geometric analysis.
Geometric invariants, Minkowski sum and geometric measures associated with convex bodies are the core of the Brunn-Minkowski type theories. Geometric invariants can be viewed as geometric functionals of convex bodies and geometric measures are the differential of geometric functionals. It is well known that Minkowski-type problems related to geometric measures (including surface area measures, curvature measures and dual curvature measures, etc.) are the cornerstone of Brunn-Minkowski theories, see e.g., \cite{BB,BH,BoL,CFL,CHZ,CHZ1,CW,G,GHW,GaHW,HaLYZ,HLYZ,
HuLY,HLY,JW,L,Lu,LYZ,S,Z} and the references therein. In the last decade, the Minkowski type problems for the measures associated with solution to the boundary-value problems are doubtless extremely important variant, as some typical examples, we can refer to some representative papers \cite{CF,CNS,FZH,HYZ,J,ZX}.
Moreover, a particularly interesting Minkowski type problem, the Gauss Minkowski problem, was first proposed by Huang, Xi and Zhao \cite{HXZ}, and more research results on this problem can be found in \cite{BLYZ,FHX,FLX,HQ,Liu,W,WW}.

Very recently, Feng, Li and Xu \cite{F} devepoped a theory analogous to the one for the Minkowski problem in which geometric measure are generated by the differential of {\it Gauss dual quermassintegral}. Let $K\in\mathcal{K}_o^n$  be the set of convex bodies containing the origin in their interior. For $q\in\mathbb{R}$ and $K\in\mathcal{K}_o^n$, the Gauss dual quermassintegral, denoted by  $\widetilde{V}_{\gamma,q}(K)$, of $K$ is defined by
\begin{align}\label{1.1}
\widetilde{V}_{\gamma,q}(K)=\int_{\mathbb{R}^n\backslash K}
e^{-\frac{|x|^2}{2}}|x|^{q-n}dx.
\end{align}
The Gauss dual curvature measure, denoted by $\widetilde{C}_{\gamma,q}(K,\cdot)$, of $K\in\mathcal{K}_o^n$ is uniquely determined by the following variational formula (see \cite[Thorem 3.3]{F})
$$\lim_{t\rightarrow0}\frac{\widetilde{V}_{\gamma,q}([h_t])
-\widetilde{V}_{\gamma,q}(K)}{t}
=-\int_{\mathbb{S}^{n-1}}f(v)
d\widetilde{C}_{\gamma,q}(K,v),$$
where $h_t=e^{tf}h_K$ and a continuous function $f: \mathbb{S}^{n-1}\rightarrow\mathbb{R}$, and $[h_t]$ is the Wulff shape generated by $h_t$ (see Section \ref{S2} for details). For each Borel measurable set $\omega\subset\mathbb{S}^{n-1}$, the Gauss dual curvature measure $\widetilde{C}_{\gamma,q}(K,\cdot)$ is defined by
$$\widetilde{C}_{\gamma,q}(K,\omega)=
\int_{\nu_K^{-1}(\omega)}x\cdot\nu_K(x)e^{-\frac{|x|^2}{2}}
|x|^{q-n}d\mathcal{H}^{n-1}(x).$$
where $\cdot$ represents the inner product in $\mathbb{R}^n$.

The {\bf Gauss dual Minkowski problem} was posed: given a non-zero finite Borel measure $\mu$ on $\mathbb{S}^{n-1}$ and $q\in\mathbb{R}$, what are necessary and sufficient conditions on $\mu$ to
guarantee the existence of a convex body $K\in\mathcal{K}_o^n$ such that
$$\mu=\widetilde{C}_{\gamma,q}(K,\cdot)?$$
If $K$ exists, is it unique?

In \cite{F}, the existence of smooth solutions to Gauss dual Minkowski problem for $q<0$ was obtained by continuity method. Then, an approximation argument was used to give the weak solution to this problem. Moreover, the existence of smooth for $q=0$ was established by a degree-theory approach,  and the uniqueness of smooth solution for $q\leq0$ was also established.

The main purpose of this paper is to construct $L_p$-Gauss dual curvature measure based on the research in \cite{F}.
An $L_p$ version of the variational formula similar to \cite[Theorem 6.2]{LYZ} can be shown: for $p\neq0, q\in\mathbb{R}$, $K\in\mathcal{K}_o^n$ and a continuous function $f :\mathbb{S}^{n-1}\rightarrow\mathbb{R}$, then
$$\lim_{t\rightarrow0}\frac{\widetilde{V}_{\gamma,q}([h_t])
-\widetilde{V}_{\gamma,q}(K)}{t}
=-\frac{1}{p}\int_{\mathbb{S}^{n-1}}f(v)^p
d\widetilde{C}_{\gamma,p,q}(K,v),$$
where $h_t(v)=(h_K(v)^p+tf(v)^p)^{\frac{1}{p}}$. This naturally leads to the definition of $L_p$ Gauss dual curvature measure:
$$\widetilde{C}_{\gamma,p,q}(K,\omega)
=\int_{\nu_K^{-1}(\omega)}(x\cdot\nu_K(x))^{1-p}
e^{-\frac{|x|^2}{2}}|x|^{q-n}d\mathcal{H}^{n-1}(x),$$
for Borel measurable set $\omega\subset\mathbb{S}^{n-1}$.

Obviously, the $L_p$-Gauss surface area measure $S_{\gamma,p}(K,\cdot)$ and the Gauss dual curvature measure $\widetilde{C}_{\gamma,q}(K,\cdot)$ are special cases of the $L_p$-Gauss dual curvature measure in the sense that for $p, q\in\mathbb{R}$ and $K\in\mathcal{K}_o^n$,
$$\widetilde{C}_{\gamma,0,q}(K,\cdot)
=\widetilde{C}_{\gamma,q}(K,\cdot), \ \ \widetilde{C}_{\gamma,p,n}(K,\cdot)
=S_{\gamma,p}(K,\cdot). $$

Naturally, the $L_p$-Gauss dual Minkowski problem is posed as follows:
\begin{problem}\label{p1.1}
(The $L_p$-Gauss dual Minkowski problem)
For $p,q\in\mathbb{R}$,
under what necessary and/or sufficient condition on a non-zero finite Borel measure $\mu$ on $\mathbb{S}^{n-1}$ does there exist a set $K\in\mathcal{K}_o^n$ such that
\begin{align*}
\mu=\widetilde{C}_{\gamma,p,q}(K,\cdot)?
\end{align*}
If $K$ exists, to what extent is it unique$?$
\end{problem}

When $K$ is sufficiently smooth and the given measure $\mu$ has a density $f: \mathbb{S}^{n-1}\rightarrow \mathbb{R}$, the $L_p$-Gauss dual Minkowski problem is equivalent to solving the following Monge-Amp\`{e}re type equation on $\mathbb{S}^{n-1}$:
\begin{align}\label{1.2}
e^{-\frac{|\nabla h_K|^2+h_K^2}{2}}h_K^{1-p}
(|\nabla h_K|^2+h_K^2)^{\frac{q-n}{2}}
\det(\nabla^2h_K+h_KI)=f,
\end{align}
where $\nabla h_K$ and $\nabla^2h_K$ are the gradient and Hessian of $h_K$ on $\mathbb{S}^{n-1}$ with respect to an orthonormal basis, and $I$ is the identity matrix.

The first objective is to consider the existence of symmetric solution to Problem \ref{p1.1} by the variational method (see Theorem \ref{t4.3} for details).

\begin{theorem}\label{t1.2}
Let $\mu$ be a non-zero finite even Borel measure on $\mathbb{S}^{n-1}$ and not concentrated in any closed hemisphere. If $p>0$ and $q>0$, then there exists an origin-symmetric convex body $K$ such that
\begin{align*}
\mu=\widetilde{C}_{\gamma,p,q}(K,\cdot).
\end{align*}
\end{theorem}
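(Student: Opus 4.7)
The plan is to realize $K$ as a minimizer of a suitable functional whose Euler--Lagrange equation, via the $L_p$-variational formula quoted in the introduction, is exactly $\mu=\widetilde{C}_{\gamma,p,q}(K,\cdot)$. Because $\mu$ is even and both $\widetilde{V}_{\gamma,q}(K)$ and $\int h_K^p\,d\mu$ are invariant under $K\mapsto -K$, I restrict attention to the class $\mathcal{K}_e^n$ of origin-symmetric convex bodies. A natural candidate is
\[
\Phi(K)=\widetilde{V}_{\gamma,q}(K)+\frac{1}{p}\int_{\mathbb{S}^{n-1}}h_K^p\,d\mu,\qquad K\in\mathcal{K}_e^n,
\]
which is finite when $p,q>0$: Gaussian decay together with $q>0$ renders the integral defining $\widetilde{V}_{\gamma,q}$ convergent at both the origin and infinity. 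Differentiating $\Phi$ along the $L_p$-deformation $h_t(v)=(h_K(v)^p+tf(v)^p)^{1/p}$ and invoking the quoted formula gives
\[
\frac{d}{dt}\bigg|_{t=0}\Phi([h_t])=\frac{1}{p}\int_{\mathbb{S}^{n-1}}f^p\,d\bigl(\mu-\widetilde{C}_{\gamma,p,q}(K,\cdot)\bigr),
\]
so any critical point of $\Phi$ in $\mathcal{K}_e^n$ solves Problem~\ref{p1.1}.

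Existence follows from the direct method. Let $\{K_j\}\subset\mathcal{K}_e^n$ be a minimizing sequence. First I would derive a uniform \emph{upper} bound on $\operatorname{diam}(K_j)$: since $\int h_{K_j}^p\,d\mu$ is controlled by $p\,\Phi(K_j)$ and $\mu$ is not concentrated on any closed hemisphere, an Aleksandrov-type compactness argument on $\mathbb{S}^{n-1}$---combined with the origin-symmetry of $K_j$, which forces $h_{K_j}$ to be comparable to $\max h_{K_j}$ on a spherical cap of uniformly positive $\mu$-measure around the direction of maximum and its antipode---yields $\max h_{K_j}\le M$ uniformly in $j$. The more delicate \emph{lower} bound preventing $K_j$ from collapsing to $\{0\}$ is the crux: the Gaussian weight $e^{-|x|^2/2}$ breaks the homogeneity that normally absorbs a Lagrange multiplier by rescaling, so the unconstrained infimum of $\Phi$ might in principle be attained only in the limit $K\to\{0\}$. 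A natural remedy is to work on the level set $\{K\in\mathcal{K}_e^n:\widetilde{V}_{\gamma,q}(K)=\xi\}$ for a chosen $\xi\in(0,\widetilde{V}_{\gamma,q}(\{0\}))$, produce a constrained minimizer $K_\xi$ satisfying $\mu=\lambda(\xi)\,\widetilde{C}_{\gamma,p,q}(K_\xi,\cdot)$ for some Lagrange multiplier $\lambda(\xi)>0$, and then exploit continuity of $\xi\mapsto\lambda(\xi)$ together with its limit behaviour at the endpoints of the admissible range of $\xi$ to invoke an intermediate-value argument producing $\xi_0$ with $\lambda(\xi_0)=1$.

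Once these bounds are in place, Blaschke's selection theorem extracts a subsequence converging in the Hausdorff metric to $K_0\in\mathcal{K}_e^n$ with $\operatorname{int}(K_0)\ne\emptyset$. The continuity of $\widetilde{V}_{\gamma,q}$ and of $K\mapsto\int h_K^p\,d\mu$ on such a set (via uniform convergence of support functions combined with dominated convergence, using the integrable majorant $e^{-|x|^2/2}|x|^{q-n}$) shows $K_0$ attains the infimum, and the first-variation identity applied at $K_0$ with arbitrary continuous test functions $f$ then yields $\mu=\widetilde{C}_{\gamma,p,q}(K_0,\cdot)$. The main obstacle throughout is the non-degeneracy step: the absence of any scaling symmetry in the Gaussian weight prevents a simple rescaling from normalizing the Lagrange multiplier, which is what forces the constraint-plus-continuity scheme outlined above.
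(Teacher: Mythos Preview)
Your overall scheme---define the unconstrained functional $\Phi(K)=\widetilde V_{\gamma,q}(K)+\tfrac1p\int h_K^p\,d\mu$ on $\mathcal K_e^n$, identify its Euler--Lagrange equation with $\mu=\widetilde C_{\gamma,p,q}(K,\cdot)$, and produce a minimizer via an upper diameter bound plus Blaschke selection---is exactly what the paper does in Lemmas~\ref{l4.1} and~\ref{l4.2}. The upper bound there follows directly from $h_{Q}(v)\ge\rho_Q(u)\,(v\cdot u)_+$ together with the non-concentration hypothesis, giving $\int h_Q^p\,d\mu\ge c_0\,\rho_Q(u)^p$; no Aleksandrov-type compactness and no origin-symmetry are needed at that stage.

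Where you diverge is the non-degeneracy step. The paper stays with the \emph{unconstrained} problem throughout: once the minimizing sequence is bounded, Blaschke selection yields a limit $K$, and the paper dispatches the issue with the single sentence that an origin-symmetric $K$ contains the origin in its interior. There is no Lagrange multiplier anywhere, because the functional is designed precisely so that its unconstrained critical points solve the problem directly---as you yourself noted at the outset. Your proposed detour through the constraint $\widetilde V_{\gamma,q}(K)=\xi$, a multiplier $\lambda(\xi)$, and an intermediate-value argument is therefore not what the paper does and would create its own problems: the map $\xi\mapsto\lambda(\xi)$ need not be single-valued or continuous, and the endpoint asymptotics would require a separate analysis you have not supplied. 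If you wish to strengthen the non-degeneracy (which the paper admittedly treats lightly), a more direct route is to compare the infimum of $\Phi$ with the degenerate value $\widetilde V_{\gamma,q}(\{0\})$, rather than to import a constraint that reintroduces exactly the multiplier you set out to avoid.
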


Next, using the Gauss curvature flow method, the existence of smooth solution to the equation (\ref{1.2}) is obtained as below (see Theorem \ref{t5.5} for details).

\begin{theorem}\label{t1.3}
Let {$p, q\in\mathbb{R}$}, and $f: \mathbb{S}^{n-1}\rightarrow(0,\infty)$ be a smooth, positive function satisfying
\begin{align}\label{1.3}
\limsup_{s\rightarrow+\infty}
(s^{q-p}e^{-\frac{s^2}{2}})<f<\liminf_{s\rightarrow0^+}
(s^{q-p}e^{-\frac{s^2}{2}}).
\end{align}
Then there exists a smooth solution $h_K$ to the equation (\ref{1.2}).
\end{theorem}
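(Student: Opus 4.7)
The plan is to produce the smooth solution of \eqref{1.2} as the long-time limit of an appropriate (normalized) Gauss curvature flow. Writing $\rho^2 := |\nabla h|^2 + h^2$, so that $\rho(x)$ is the distance from the origin to the boundary point of $K$ with outer unit normal $x$, equation \eqref{1.2} reads $\det(\nabla^2 h + hI) = f(x)\, h^{p-1}\rho^{n-q}\, e^{\rho^2/2}$. I would therefore evolve a smooth, strictly convex, origin-symmetric initial body $K_0$ via a flow of the shape
$$\frac{\partial h}{\partial t}(x,t) = h(x,t)\left(\eta(t)\,\frac{\det(\nabla^2 h + hI)}{f(x)\, h^{p-1}\rho^{n-q}\, e^{\rho^2/2}} - 1\right),$$
where $\eta(t)$ is a global scalar chosen so that a suitable monotone functional is preserved; then stationary support functions solve \eqref{1.2} (with $\eta$ absorbed into $f$, which is harmless for the existence statement). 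Short-time existence of a smooth, strictly convex solution is standard from parabolic theory since the flow is uniformly parabolic on smooth, strictly convex bodies.

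The central effort is to establish uniform-in-$t$ a priori estimates. For the $C^0$ bound, I would evaluate the flow equation at a spatial maximum point of $h(\cdot,t)$: there $\nabla h = 0$ and $\nabla^2 h \le 0$, so $\rho = h$ and $\det(\nabla^2 h + hI) \le h^{n-1}$, giving $\partial_t h \le h\bigl(\eta(t)\, h^{q-p}\, e^{-h^2/2}/f - 1\bigr)$. The pinching assumption \eqref{1.3} is tailor-made so that the right-hand side becomes negative once $h$ is too large, yielding a uniform upper bound $h_{\max} \le C_1$; a symmetric argument at a spatial minimum point (where $\nabla^2 h \ge 0$, hence $\det(\nabla^2 h + hI) \ge h^{n-1}$) yields $h_{\min} \ge c_1 > 0$. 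The $C^1$ bound $|\nabla h| \le C_2$ then follows at once from $|\nabla h|^2 + h^2 = \rho^2 \le h_{\max}^2$.

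The hard part will be the $C^2$ estimate, namely uniform two-sided bounds on the eigenvalues of $\nabla^2 h + hI$, giving uniform parabolicity. Following the by-now-standard Tso-type recipe, I would analyze along the flow an auxiliary quantity of the form $\log\det(\nabla^2 h + hI) - A h - B\rho^2/2$, for constants $A, B$ chosen in light of the pinching, via the maximum principle. The Gaussian weight $e^{-\rho^2/2}$ and the possibly negative exponents $1-p$ and $q-n$ generate a number of indefinite lower-order terms that must be absorbed using the $C^0$ and $C^1$ bounds together with \eqref{1.3}. This delicate absorption is, in my view, the main obstacle, precisely because $p$ and $q$ are allowed to range over all of $\mathbb{R}$ without any sign restriction. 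Once a uniform upper bound on the principal radii and a positive lower bound on the Gauss curvature are secured, uniform parabolicity is in hand, and Krylov--Safonov together with standard Schauder theory yield uniform $C^{k,\alpha}$ estimates for every $k$; in particular, the flow exists for all $t \ge 0$.

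Finally, I would construct a Lyapunov functional (an $L_p$-Gauss dual quermassintegral-type energy whose derivative along the flow has a definite sign) and use it together with the uniform higher-order estimates to extract a smoothly convergent subsequence $h(\cdot,t_j) \to h_\infty$. Monotonicity forces the time derivative to vanish in the limit, so $h_\infty$ is a stationary point of the flow, hence a smooth solution of \eqref{1.2}.
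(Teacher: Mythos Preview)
Your overall architecture---a Gauss curvature flow, maximum-principle $C^0$ bounds exploiting \eqref{1.3}, $C^1$ via $\rho^2=h^2+|\nabla h|^2$, $C^2$ via auxiliary functions, then Krylov regularity and a monotone functional to extract a stationary limit---is exactly the route the paper takes in Section~\ref{S5}. But there is one genuine gap in your proposal.

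\textbf{The normalization $\eta(t)$ cannot be ``absorbed into $f$''.} Equation \eqref{1.2} is \emph{not} homogeneous in $h$: the Gaussian weight $e^{-\rho^2/2}$ destroys any scaling symmetry. If $h_\infty$ is stationary for your flow with limiting constant $\eta_\infty\neq 1$, then $h_\infty$ solves \eqref{1.2} with $f$ replaced by $f/\eta_\infty$, and no dilation of $h_\infty$ recovers a solution for the \emph{given} $f$. Your $C^0$ argument also tacitly requires uniform two-sided bounds on $\eta(t)$, which is circular before $C^0$ control is in hand. The paper's key observation is that \emph{no normalization is needed at all}: it runs the un-normalized contracting flow
\[
\partial_t h \;=\; -\,f(x)\,h^{p}\rho^{n-q}e^{\rho^2/2}\,\mathcal{K} \;+\; h,
\]
whose stationary points solve \eqref{1.2} exactly, and the pinching hypothesis \eqref{1.3} by itself forces uniform $C^0$ bounds via the maximum-principle computation you sketch (Lemma~\ref{l5.2}). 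The functional of Lemma~\ref{l5.1} is then monotone \emph{decreasing} (not preserved), and $\partial_t\Phi\to 0$ along a subsequence produces the stationary limit. Note also that your flow is of expanding type (speed $\propto 1/\mathcal{K}$) while the paper's is contracting; either can be made to work, but the $C^2$ computations are not interchangeable.

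\textbf{A smaller point on $C^2$.} The single auxiliary $\log\det(\nabla^2 h+hI)-Ah-B\rho^2/2$ you propose controls only the product of the principal radii, not the largest one individually. The paper splits the estimate into two steps (Lemma~\ref{l5.4}): a Tso-type quotient $(h-h_t)/(h-\varepsilon_0)$ for an upper bound on $\mathcal{K}$, followed by $\log b_{11}-a\log h-b|\nabla h|^2$ and the Ricci identity on $\mathbb{S}^{n-1}$ for an upper bound on the maximal principal radius.
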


The Gauss curvature flow was first introduced and studied by Firey \cite{Fi} to model the shape change of worn stones.  Since then, many scholars have found that using curvature flow to study the hypersurfaces is a very effective tool, such as solving the Minkowski-type problems, see e.g., \cite{BIS,CC,CH,CL,CWX,LSW,LL} and the references therein.

Finally, we establish the uniqueness of the solution to the equation (\ref{1.2}) as follows (see Theorem \ref{t6.1} for details).

\begin{theorem}\label{t1.4}
Let $p, q\in\mathbb{R}$ with $q<p$, and $f: \mathbb{S}^{n-1}\rightarrow(0,\infty)$ be a smooth, positive function on $\mathbb{S}^{n-1}$. Then the solution to the equation (\ref{1.2}) is unique.
\end{theorem}

This paper is organized as follows.
In Section \ref{S2}, we collect some basic facts on convex bodies and convex hypersurfaces.
In Section \ref{S3}, we establish a $L_p$-variational formula of Gauss dual quermassintegral, and introduce the $L_p$-Gauss dual curvature measure and $L_p$-Gauss dual Minkowski problem.
In Section \ref{S4}, the existence of convex body solution to the $L_p$-Gauss dual Minkowski problem will be discussed by variational method.
In Section \ref{S5}, the existence of smooth solution to the $L_p$-Gauss dual Minkowski problem will be obtained by curvature flow method.
Finally, the uniqueness of the solution of $L_p$-Gauss dual Minkowski problem will be established in Section \ref{S6}.

\section{Preliminaries}\label{S2}
In this section we list some facts about convex bodies and convex hypersurfaces that readers can refer to two celebrated books of Gardner and Schneider \cite{G,S} and papers \cite{HLYZ,U}.

For a convex body $K\in\mathcal{K}_o^n$, the support function, $h(K,\cdot): \mathbb{S}^{n-1}\rightarrow\mathbb{R}$, of $K$ is defined by
$$h_K(u)=h(K,u)=\max\{u\cdot Y: Y\in K\}, \ u\in\mathbb{S}^{n-1}.$$
We clearly know that the support function is a continuous and homogeneous convex function with degree $1$. The Minkowski sum of $K, L\in\mathcal{K}_o^n$ is defined, for $t>0$, by
$$K+tL=\{x+ty: x\in K, \ y\in L\}.$$
The set $K+tL$ is a convex body whose support function is given by
$$h(K+tL,\cdot)=h(K,\cdot)+th(L,\cdot).$$
For $K, L\in\mathcal{K}_o^n$ and $p\geq1$, the $L_p$ Minkowski sum, $K+_pt\cdot L$, of $K$ and $L$ is defined through its support function
$$h(K+_pt\cdot L,\cdot)^p=h(K,\cdot)^p+th(L,\cdot)^p.$$
In particular, when $p=1$, the $L_p$ Minkowski sum is just classical Minkowski sum.

For each $K\in\mathcal{K}_o^n$, the radial function $\rho(K,\cdot)$ is defined by
$$\rho_K(x)=\rho(K,x)=\max\{\lambda: \lambda x\in K\},\ x\in\mathbb{R}^n\backslash\{0\}.$$
Obviously, the radial function is a continuous function and homogeneous of degree $-1$. Moreover $\partial K=\{\rho_K(v)v: v\in\mathbb{S}^{n-1}\}$.

For each $K\in\mathcal{K}_o^n$, the polar body $K^*$ of $K$ is the convex body defined by
$$K^*=\{x\in\mathbb{R}^n: x\cdot y\leq1, \ for\ all\ y\in K\}.$$
It is easy to verify that
\begin{align}\label{2.1}
(K^*)^*=K.
\end{align}
The support function and radial function of $K\in\mathcal{K}_o^n$ and its polar body are related by
\begin{align}\label{2.2}
h_K(x)\rho_{K^*}(x)=1 \ \ and \ \ \rho_{K}(x)h_{K^*}(x)=1,
\end{align}
for $x\in\mathbb{R}^n\backslash\{0\}$.

Let Let $C(\mathbb{S}^{n-1})$ be the set of all continuous functions on $\mathbb{S}^{n-1}$,  $C^+(\mathbb{S}^{n-1})$ be the set of all strictly positive continuous functions on $\mathbb{S}^{n-1}$ and $C^+_e(\mathbb{S}^{n-1})$ be the subset of even functions from $C^+(\mathbb{S}^{n-1})$. For each $f\in C^+(\mathbb{S}^{n-1})$, the Wulff shape $[f]$ generated by $f$ is the convex body defined by
$$[f]=\{x\in\mathbb{R}^n: x\cdot v\leq f(v),\ for \ all \ v\in\mathbb{S}^{n-1}\}.$$
It is apparent that $h_{[f]}\leq f$ and $[h_K]=K$ for $K\in\mathcal{K}_o^n$. The convex hull $\langle f\rangle$ generated by $f$ is defined by
$$\langle f\rangle=conv\{f(u)u: u\in\mathbb{S}^{n-1}\}\in\mathcal{K}_o^n.$$
If $K\in\mathcal{K}_o^n$, then $\langle \rho_K\rangle=K$. Moreover,
\begin{align}\label{2.3}
[f]^*=\langle\frac{1}{f}\rangle.
\end{align}

The boundary point of convex body $K$ which only has one supporting hyperplane is called the singular point. The set of singular points is denoted by $\sigma K$, it is
well known that $\sigma K$ has spherical Lebesgue measure 0.
For $x\in\partial K\setminus \sigma K$, the Gauss map $\nu_K:x\in\partial K\setminus \sigma K\rightarrow \mathbb{S}^{n-1}$ is represented by
\begin{align*}
\nu_K(x)=\{v\in\mathbb{S}^{n-1}:x\cdot v=h_K(v)\}.
\end{align*}
Correspondingly, for a Borel set $\eta\subset\mathbb{S}^{n-1}$, the inverse Gauss map is denoted by
$\nu_K^{-1}$,
\begin{align*}
\nu_K^{-1}(\eta)=\{x\in\partial K: \nu_K(x)\in\eta\}.
\end{align*}

Note that $h_K(x)$ is differentiable at $x\in\mathbb{R}^n$ if and only if $\partial h_K(x)$ consists of exactly one vector which is the gradient, denoted by $Dh_K(x)$, of $h_K$ at $x$.
Let $h_K$ be differentiable at $v\in\mathbb{S}^{n-1}$ for a convex body $K$, where $v=\nu_K(x)$ is an outer unit normal vector at $x\in\partial K$. Then
$$x=\nu_K^{-1}(v)=Dh_K(v).$$
From this, we have
\begin{align}\label{2.5}
&\nonumber h_K(v)=h_K(\nu_K(x))=\langle x,\nu_K(x)\rangle=\langle Dh_K(v),v\rangle,\\
&Dh_K(v)=\nabla h_K(v)+h_K(v)v,\\
&\nonumber |x|^2=|Dh_K(v)|^2=|\nabla h_K(v)|^2+h_K(v)^2.
\end{align}

For a convex hypersurface $\Omega$ of class $C^2_+$ ($\partial\Omega$ is $C^2$ smooth and has positive Gauss curvature), then the support function of $\Omega$ can be stated as
$$h(\Omega,x)=x\cdot\nu^{-1}_\Omega(x).$$
Let $e=\{e_{ij}\}$ be the standard metric of $\mathbb{S}^{n-1}$.
The second fundamental form of $\Omega$ is defined as
\begin{align}\label{2.5}
\Pi_{ij}=\nabla_{ij}h+he_{ij},
\end{align}
where $\nabla_{ij}$ is the second order covariant derivative with respect to $e_{ij}$.
By the Weingarten's formula and (\ref{2.5}), the principal radii of $\Omega$, under a smooth local orthonormal frame on $\mathbb{S}^{n-1}$, are the eigenvalues of matrix
\begin{align*}
b_{ij}=\nabla_{ij}h+h\delta_{ij}.
\end{align*}
Particularly, the Gauss curvature of $\Omega$ can be expressed as
\begin{align*}
\mathcal{K}(x)=\frac{1}{\det(\nabla_{ij}h+h\delta_{ij})}.
\end{align*}

\section{$L_p$-Gaussian dual curvature measure and $L_p$ Gaussian dual Minkowski problem}\label{S3}
In this section, we establish the $L_p$ variational formula of Gauss dual quermassintegral, and introduce the $L_p$ Gauss dual curvature measure and its related $L_p$ Gauss dual Minkowski problem.

\begin{lemma}\label{l3.1}
Let $K\in\mathcal{K}_o^n$, $f: \mathbb{S}^{n-1}\rightarrow\mathbb{R}$ be a continuous function, and $\varepsilon>0$ be sufficiently small. Define $h_t\in C^+(\omega)$, for $t\in(-\varepsilon,\varepsilon)$ and $v\in\mathbb{S}^{n-1}$, by
$$h_t(v)=(h_K(v)^p+tf(v)^p)^{\frac{1}{p}}.$$
Then, for almost all $u\in\mathbb{S}^{n-1}$ with respect to the spherical Lebesgue measure,
$$\lim_{t\rightarrow0}\frac{\rho_{[h_t]}(u)
-\rho_K(u)}{t}=\frac{f(\alpha_K(u))^p}
{ph_K(\alpha_K(u))^p}\rho_K(u).$$
Moreover, there exists $M_0>0$ and $\varepsilon_0>0$ such that
$$|\rho_{[h_t]}(u)-\rho_K(u)|\leq M_0|t|,\ t\in(-\varepsilon_0,\varepsilon_0).$$
\end{lemma}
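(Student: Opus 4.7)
The plan is to reduce both claims to a first-order expansion of $h_t$ in $t$, combined with the standard Wulff-shape representation
$$\rho_{[h_t]}(u)=\min_{v\in\mathbb{S}^{n-1}}\frac{h_t(v)}{u\cdot v}.$$
Since $K\in\mathcal{K}_o^n$, there are constants $c_1,c_2>0$ with $c_1\le h_K\le c_2$ on $\mathbb{S}^{n-1}$. For $|t|$ small enough (uniformly in $v$), $h_K(v)^p+tf(v)^p$ is bounded away from $0$, so a direct binomial expansion gives
$$h_t(v)=h_K(v)+t\,\frac{f(v)^p}{p\,h_K(v)^{p-1}}+O(t^2),$$
where the $O(t^2)$-term is uniform in $v$. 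I will write $g(v)=f(v)^p/(p\,h_K(v)^{p-1})$ throughout; note that $g$ is continuous on $\mathbb{S}^{n-1}$.

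For the uniform Lipschitz bound, the expansion yields $|h_t(v)-h_K(v)|\le C|t|$ uniformly in $v$. Evaluating $\rho_{[h_t]}(u)\le h_t(v^*)/(u\cdot v^*)$ at any fixed supporting normal $v^*=\alpha_K(u)$ of the boundary point $\rho_K(u)u$ produces the one-sided estimate $\rho_{[h_t]}(u)-\rho_K(u)\le \tilde C|t|$; symmetrically, taking a minimizer $v_t^*$ of $v\mapsto h_t(v)/(u\cdot v)$ and using $h_K(v_t^*)/(u\cdot v_t^*)\ge \rho_K(u)$ gives the opposite inequality. This yields $|\rho_{[h_t]}(u)-\rho_K(u)|\le M_0|t|$ for all $t\in(-\varepsilon_0,\varepsilon_0)$, with $M_0$ independent of $u$.

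For the derivative claim, fix $u\in\mathbb{S}^{n-1}$ outside the measure-zero set where the Gauss map of $K$ at $\rho_K(u)u$ is multi-valued; then $\alpha_K(u)=v^*$ is uniquely determined, $u\cdot v^*=h_K(v^*)/\rho_K(u)$, and $v^*$ is the unique minimizer of $v\mapsto h_K(v)/(u\cdot v)$. Testing with $v^*$ directly gives
$$\frac{\rho_{[h_t]}(u)-\rho_K(u)}{t}\le \frac{g(v^*)}{u\cdot v^*}+O(t)=\frac{f(v^*)^p\,\rho_K(u)}{p\,h_K(v^*)^p}+O(t)$$
for $t>0$, with the reverse for $t<0$. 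For the matching direction, let $v_t^*$ realize $\rho_{[h_t]}(u)$; uniform convergence $h_t\to h_K$ together with uniqueness of the minimizer at regular $u$ forces $v_t^*\to v^*$, and then
$$\rho_{[h_t]}(u)=\frac{h_t(v_t^*)}{u\cdot v_t^*}\ge \frac{h_K(v_t^*)+t\,g(v_t^*)-C t^2}{u\cdot v_t^*},$$
so passing to the limit via continuity of $g$ and $h_K$ closes the gap and produces the stated limit. The main obstacle is precisely this minimizer-selection step: one has to justify $v_t^*\to v^*$, which is exactly where the a.e.\ uniqueness of $\alpha_K$ enters; the remaining manipulations are routine given the uniform expansion of Step~1.
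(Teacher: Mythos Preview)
Your proposal is correct and takes a genuinely different route from the paper. The paper does not work with the infimum representation $\rho_{[h_t]}(u)=\inf_{u\cdot v>0} h_t(v)/(u\cdot v)$ at all; instead it dualizes via $\rho_{[h_t]}=1/h_{[h_t]^*}=1/h_{\langle 1/h_t\rangle}$, rewrites the perturbation as $\log h_t=\log h_K+\frac{tf^p}{ph_K^p}+o(t)$, and then invokes \cite[Lemma~4.1]{HLYZ} as a black box to obtain both the derivative of $h_{\langle\varrho_t\rangle}$ and the uniform bound $|\log\rho_{[h_t]}-\log\rho_K|\le M|t|$; the additive Lipschitz estimate is then extracted by an elementary $|s-1|\le m_3|\log s|$ argument. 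Your two-test-point approach---testing at $v^*=\alpha_K(u)$ for one inequality and at the actual minimizer $v_t^*$ for the other, then using $v_t^*\to v^*$ at regular $u$---is essentially a direct proof of what that cited lemma packages, so you gain self-containment while the paper gains brevity by outsourcing the minimizer-convergence step. One detail worth spelling out in your write-up: for $M_0$ to be independent of $u$ you need $u\cdot v^*$ and $u\cdot v_t^*$ bounded away from zero uniformly, which follows from $u\cdot v^*=h_K(v^*)/\rho_K(u)$ (and the analogous identity for $v_t^*$) together with the two-sided $C^0$ bounds on $h_K$, $h_t$, $\rho_K$, $\rho_{[h_t]}$.
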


\begin{proof}
From \cite[Lemma 4.1]{HLYZ}, we have, for $u\in\mathbb{S}^{n-1}$ and $t\in(-\varepsilon,\varepsilon)$
\begin{align*}
&\frac{d}{dt}(\log h_{\langle\varrho_t\rangle}(u))|_{t=0}
\lim_{t=0}\frac{h_{\langle\varrho_t\rangle}(u)-
h_{\langle\varrho_0\rangle}(u)}{t}\\
&=\frac{1}{h_{\langle\varrho_t\rangle}(u)}\bigg|_{t=0}
\lim_{t=0}\frac{h_{\langle\varrho_t\rangle}(u)-
h_{\langle\varrho_0\rangle}(u)}{t}
=g(\alpha^\ast_{\langle\varrho_0\rangle}(u)),
\end{align*}
i.e.,
\begin{align}\label{3.1}
\lim_{t=0}\frac{h_{\langle\varrho_t\rangle}(u)-
h_{\langle\varrho_0\rangle}(u)}{t}
=h_{\langle\varrho_t\rangle}(u)
g(\alpha^\ast_{\langle\varrho_0\rangle}(u)),
\end{align}
where $g: \mathbb{S}^{n-1}\rightarrow \mathbb{R}$ is continuous and $\varrho_t: \mathbb{S}^{n-1}\rightarrow(0,\infty)$ is defined by
$$\log\varrho_t(u)=\log\varrho_0(u)+tg(u)+o(t,u).$$

By (\ref{2.2}) and (\ref{2.3}), one has
\begin{align}\label{3.2}
\nonumber&\lim_{t\rightarrow0}\frac{\rho_{[h_t]}(u)
-\rho_K(u)}{t}=\lim_{t\rightarrow0}
\frac{h_{[h_t]^*}^{-1}(u)
-h_{K^*}^{-1}(u)}{t}\\
\nonumber&=-\lim_{t\rightarrow0}\frac{h_{
\langle\frac{1}{h_t}\rangle}(u)
-h_{\langle\frac{1}{h_K}\rangle}(u)}
{th_{\langle\frac{1}{h_t}\rangle}(u)
h_{\langle\frac{1}{h_K}\rangle}(u)}\\
\nonumber&=-\frac{1}{h_{
\langle\frac{1}{h_K}\rangle}(u)^2}
\lim_{t\rightarrow0}\frac{h_{\langle\frac{1}{h_t}\rangle}(u)
-h_{\langle\frac{1}{h_K}\rangle}(u)}{t}\\
&=-\rho_K^2(u)\lim_{t\rightarrow0}
\frac{h_{\langle\frac{1}{h_t}\rangle}(u)
-h_{\langle\frac{1}{h_0}\rangle}(u)}{t}.
\end{align}

Let $\varrho_t=\frac{1}{h_t}$. Then
$$\log h_t=\log h_K-tg-o(t).$$
Since
$$h_t(v)=(h_K(v)^p+tf(v)^p)^{\frac{1}{p}},\ v\in\mathbb{S}^{n-1},$$
it follows that
$$\log h_t=\log h_K+\frac{tf^p}{ph_K^p}+o(t).$$
Thus
\begin{align*}
g=-\frac{f^p}{ph_K^p}.
\end{align*}
By (\ref{2.1}), (\ref{3.1}), (\ref{3.2}) and \cite[Lemma 2.6]{HLYZ}, we have
\begin{align*}
\lim_{t\rightarrow0}\frac{\rho_{[h_t]}(u)
-\rho_K(u)}{t}&=\frac{1}{p}\rho_K^2(u)
h_{\langle\frac{1}{h_0}\rangle}(u)
g(\alpha^\ast_{\langle\varrho_0\rangle}(u))\\
&=\frac{f(\alpha_K(u))^p}{ph_K(\alpha_K(u))^p}\rho_K(u).
\end{align*}

Since $[h_0]=K\in\mathcal{K}_o^n$ and $[h_t]\rightarrow K$ as $t\rightarrow0$ by Aleksandrov's convergence theorem for Wulff shapes (see \cite{S}), then there exist $m_1, m_2\in(0,\infty)$ and $\varepsilon_0>0$ such that
$$0<m_1<\rho_{[h_t]}<m_2<\infty \ \ on \ \ \mathbb{S}^{n-1},$$
for $t\in(-\varepsilon_0,\varepsilon_0)$. Thus
\begin{align}\label{3.3}
0<\frac{\rho_{[h_t]}}{\rho_{K}}<m_3 \ \ on \ \  \mathbb{S}^{n-1},
\end{align}
for some $m_3>1$. Observe that $s-1\geq\log s$ for $s\in(0,1)$ while $s-1\leq\log s\leq m_3\log s$ for $s\in[1,m_3]$. Thus, when $s\in(0,m_3)$
$$|s-1|\leq m_3|\log s|.$$
By (\ref{3.3}), we have
$$\bigg|\frac{\rho_{[h_t]}}{\rho_{K}}-1\bigg|\leq m_3\bigg|\log\frac{\rho_{[h_t]}}{\rho_{K}}\bigg|.$$
This, together with (\ref{3.3}) and fact that $|\log\rho_{[h_t]}-\log\rho_{K}|\leq M|t|$ (by \cite[Lemma 4.1]{HLYZ}), implies that
$$|\rho_{[h_t]}-\rho_{K}|\leq m_2\rho_{K}|\log\rho_{[h_t]}-\log\rho_{K}|\leq m_2m_3M|t|=M_0|t|,$$
on $\mathbb{S}^{n-1}$ for $t\in(-\varepsilon_0,\varepsilon_0)$. Here $M_0=m_2m_3M$ where $M$ comes from \cite[Lemma 4.1]{HLYZ}. The proof is completed.
\end{proof}

The following variational formula gives rise to the $L_p$ Gauss dual surface area measure.

\begin{theorem}\label{t3.2}
For $p\neq0$ and $q\in\mathbb{R}$. Let $K\in\mathcal{K}_o^n$,  $f\in C(\mathbb{S}^{n-1})$ and $h_t$ be given in Lemma \ref{l3.1}.
Then
$$\lim_{t\rightarrow0}\frac{\widetilde{V}_{\gamma,q}([h_t])
-\widetilde{V}_{\gamma,q}(K)}{t}=-\frac{1}{p}
\int_{\mathbb{S}^{n-1}}f(v)^p
d\widetilde{C}_{\gamma,p,q}(K,v).$$
\end{theorem}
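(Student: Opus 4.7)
The plan is to reduce the variational identity to a pointwise computation: express $\widetilde V_{\gamma,q}$ as a polar integral in $\rho$, differentiate under the integral using the estimates in Lemma~\ref{l3.1}, and then translate the resulting spherical integral back into an integral against $\widetilde C_{\gamma,p,q}(K,\cdot)$ via the radial Gauss map.

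First I would pass to spherical coordinates $x=ru$ in the definition of $\widetilde V_{\gamma,q}$, obtaining
$$\widetilde V_{\gamma,q}(L)=\int_{\mathbb S^{n-1}}\int_{\rho_L(u)}^{\infty}e^{-r^{2}/2}\,r^{\,q-1}\,dr\,du$$
for every $L\in\mathcal K_o^n$. Applying this to $L=[h_t]$ and $L=K$ and subtracting gives
$$\frac{\widetilde V_{\gamma,q}([h_t])-\widetilde V_{\gamma,q}(K)}{t}=-\int_{\mathbb S^{n-1}}\frac{1}{t}\int_{\rho_K(u)}^{\rho_{[h_t]}(u)}e^{-r^{2}/2}\,r^{\,q-1}\,dr\,du.$$

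Next I would pass the limit $t\to 0$ inside the outer integral. Lemma~\ref{l3.1} supplies the uniform Lipschitz bound $|\rho_{[h_t]}(u)-\rho_K(u)|\le M_0|t|$, so the inner integration range stays inside a compact annulus away from the origin (since $K\in\mathcal K_o^n$ forces $\rho_K$ to lie between two positive constants), on which the kernel $e^{-r^{2}/2}r^{\,q-1}$ is bounded. Combined with the a.e.\ pointwise formula
$$\lim_{t\to 0}\frac{\rho_{[h_t]}(u)-\rho_K(u)}{t}=\frac{f(\alpha_K(u))^{p}}{p\,h_K(\alpha_K(u))^{p}}\,\rho_K(u),$$
also from Lemma~\ref{l3.1}, dominated convergence yields
$$\lim_{t\to 0}\frac{\widetilde V_{\gamma,q}([h_t])-\widetilde V_{\gamma,q}(K)}{t}=-\frac{1}{p}\int_{\mathbb S^{n-1}}e^{-\rho_K(u)^{2}/2}\,\rho_K(u)^{q}\,\frac{f(\alpha_K(u))^{p}}{h_K(\alpha_K(u))^{p}}\,du.$$

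Finally I would identify this spherical integral with $-\frac{1}{p}\int_{\mathbb S^{n-1}}f(v)^{p}\,d\widetilde C_{\gamma,p,q}(K,v)$. Parametrising $\partial K$ by the radial map $u\mapsto \rho_K(u)u$, the standard Jacobian from the radial Gauss map is $d\mathcal H^{n-1}(x)=\rho_K(u)^{n}/h_K(\alpha_K(u))\,du$, and one has $|x|=\rho_K(u)$, $\nu_K(x)=\alpha_K(u)$, $x\cdot\nu_K(x)=h_K(\alpha_K(u))$. Substituting these into the definition of $\widetilde C_{\gamma,p,q}(K,\cdot)$ converts the surface integral $\int_{\partial K}f(\nu_K(x))^{p}(x\cdot\nu_K(x))^{1-p}e^{-|x|^{2}/2}|x|^{q-n}d\mathcal H^{n-1}(x)$ into exactly the spherical expression obtained above. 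The main obstacle is the exponent bookkeeping in this last step: the factors $|x|^{q-n}$ and $(x\cdot\nu_K(x))^{1-p}$ from the definition of $\widetilde C_{\gamma,p,q}$ must combine with the Jacobian factor $\rho_K^{n}/h_K(\alpha_K)$ to produce precisely $\rho_K^{q}\,h_K(\alpha_K)^{-p}$; the dominated-convergence step, while conceptually central, is routine given the uniform bound of Lemma~\ref{l3.1}.
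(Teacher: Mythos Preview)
Your proposal is correct and follows essentially the same route as the paper: rewrite $\widetilde V_{\gamma,q}$ in polar coordinates, use the uniform Lipschitz bound of Lemma~\ref{l3.1} together with the boundedness of $e^{-r^{2}/2}r^{q-1}$ on a compact annulus to justify dominated convergence, and then convert the resulting spherical integral to a boundary integral via the radial map (the paper cites \cite[(3.39)]{LYZ} for this Jacobian; your formula $d\mathcal H^{n-1}(x)=\rho_K(u)^{n}/h_K(\alpha_K(u))\,du$ is exactly that). The only cosmetic difference is that the paper packages the domination bound via the mean value theorem applied to $F(s)=\int_s^\infty e^{-r^{2}/2}r^{q-1}dr$, whereas you bound the inner integral directly; the content is the same.
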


\begin{proof}
From the definition of $h_t$, we have
$$\log h_t(u)=\log h_K(u)+\frac{tf(u)^p}{ph_K(u)^p},\ u\in\mathbb{S}^{n-1}.$$
From (\ref{1.1}), it is equivalent to
$$\widetilde{V}_{\gamma,q}([h_t])=\int_{\mathbb{S}^{n-1}}
\int_{\rho_{[h_t]}}^\infty e^{-\frac{r^2}{2}}r^{q-1}drdu.$$
Let $t\rightarrow0$. Then there exist $\lambda_1, \lambda_2>0$ such that $\rho_{[h_t]}, \rho_{K}\in[\lambda_1, \lambda_2]$. Set
$$F(s)=\int_s^\infty e^{-\frac{r^2}{2}}r^{q-1}dr.$$
According to the mean value theorem, there exists $\xi$ between $\rho_{[h_t]}$ and $\rho_{K}$ such that
\begin{align}\label{3.4}
|F(\rho_{[h_t]})-F(\rho_{K})|=|F^\prime(\xi)|
|\rho_{[h_t]}-\rho_{K}|\leq e^{-\frac{\xi^2}{2}}\xi^{q-1}M_0|t|\leq\lambda_3|t|,
\end{align}
where $\lambda_3=\max_{\xi\in[\lambda_1,\lambda_2]}
(e^{-\frac{\xi^2}{2}}\xi^{q-1}M_0)$, and $M_0$ comes from Lemma \ref{3.1}.

By Lemma \ref{3.1}, (\ref{3.4}) and \cite[(3.39)]{LYZ}, employing the dominated convergence theorem, one has

\begin{align*}
&\lim_{t\rightarrow0}\frac{\widetilde{V}_{\gamma,q}([h_t])-
\widetilde{V}_{\gamma,q}(K)}{t}
=\lim_{t\rightarrow0}\int_{\mathbb{S}^{n-1}}
\frac{F(\rho_{[h_t]})-F(\rho_{K})}{t}du\\
&=-\frac{1}{p}\int_{\mathbb{S}^{n-1}}e^{-\frac{\rho_K^2(u)}{2}}
\rho_K^{q}(u)\frac{f(\alpha_K(u))^p}{h_K(\alpha_K(u))^p}du\\
&=-\frac{1}{p}\int_{\mathbb{S}^{n-1}}f(\alpha_K(u))^p
e^{-\frac{\rho_K^2(u)}{2}}\rho_K^{q-n}(u)\frac{\rho_K^{n}(u)}
{h_K(\alpha_K(u))^p}du\\
&=-\frac{1}{p}\int_{\partial^\prime K}f(\nu_K(x))^p
e^{-\frac{|x|^2}{2}}|x|^{q-n}(x\cdot\nu_K(x))^{1-p}
d\mathcal{H}^{n-1}(x)\\
&=-\frac{1}{p}\int_{\mathbb{S}^{n-1}}f(v)^p
d\widetilde{C}_{\gamma,p,q}(K,v).
\end{align*}
The proof is completed.
\end{proof}

Thus, the definition of $L_p$-Gaussian dual curvature measure deduced from the variational formula in Theoren \ref{t3.2} is obtained.

\begin{definition}\label{d3.3}
Let $K\in\mathcal{K}_o^n$ and $p,q\in\mathbb{R}$. Define the $L_p$-Gaussian dual curvature measure of $K$ by: for Borel set $\omega\subset\mathbb{S}^{n-1}$,
$$\widetilde{C}_{\gamma,p,q}(K,\omega)
=\int_{\nu_K^{-1}(\omega)}(x\cdot\nu_K(x))^{1-p}
e^{-\frac{|x|^2}{2}}|x|^{q-n}d\mathcal{H}^{n-1}(x).$$
\end{definition}

The following result gives the weak convergence of the $L_p$-Gaussian dual curvature measure.

\begin{proposition}\label{p3.4}
If $K_i\in\mathcal{K}_o^n$ such that $K_i$ converges to $K_0\in\mathcal{K}_o^n$ in Hausdorff metric, then
$\widetilde{C}_{\gamma,p,q}(K_i,\cdot)$ converges to $\widetilde{C}_{\gamma,p,q}(K_0,\cdot)$ weakly.
\end{proposition}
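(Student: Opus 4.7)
The plan is to test against an arbitrary $g\in C(\mathbb{S}^{n-1})$ and reduce the statement to an integral over $\mathbb{S}^{n-1}$ against spherical Lebesgue measure, then apply bounded convergence. Using the radial parametrization $x=\rho_K(u)u$ together with the change of variables already carried out in the proof of Theorem \ref{t3.2}, Definition \ref{d3.3} can be rewritten, for any $K\in\mathcal{K}_o^n$, as
\begin{align*}
\int_{\mathbb{S}^{n-1}} g(v)\,d\widetilde{C}_{\gamma,p,q}(K,v)
=\int_{\mathbb{S}^{n-1}} g(\alpha_K(u))\,h_K(\alpha_K(u))^{-p}\,
e^{-\rho_K(u)^2/2}\,\rho_K(u)^{q}\,du.
\end{align*}
So it suffices to show this integral, evaluated at $K=K_i$, converges to its value at $K=K_0$.

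Next I would collect three convergence ingredients. First, since $K_i\to K_0$ in the Hausdorff metric and $K_0\in\mathcal{K}_o^n$, there exist $0<r<R$ with $rB\subset K_i\subset RB$ for all sufficiently large $i$, hence $r\leq h_{K_i},\rho_{K_i}\leq R$ on $\mathbb{S}^{n-1}$, uniformly in $i$. Second, it is standard (see, e.g., \cite{S}) that $h_{K_i}\to h_{K_0}$ and $\rho_{K_i}\to\rho_{K_0}$ uniformly on $\mathbb{S}^{n-1}$. Third, the reverse radial Gauss maps converge pointwise almost everywhere: for every $u$ such that $\rho_{K_0}(u)u$ is a regular boundary point of $K_0$ (which holds for $\mathcal{H}^{n-1}$-a.e.\ $u\in\mathbb{S}^{n-1}$), one has $\alpha_{K_i}(u)\to\alpha_{K_0}(u)$. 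This last fact follows from a routine subsequence argument: any limit point of $\nu_{K_i}(\rho_{K_i}(u)u)$ must be a unit outer normal to $K_0$ at the limit point $\rho_{K_0}(u)u$, and uniqueness of the normal at a regular point forces convergence of the whole sequence.

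With these ingredients in hand, the integrand
\begin{align*}
\Phi_i(u):=g(\alpha_{K_i}(u))\,h_{K_i}(\alpha_{K_i}(u))^{-p}\,
e^{-\rho_{K_i}(u)^2/2}\,\rho_{K_i}(u)^{q}
\end{align*}
is uniformly bounded on $\mathbb{S}^{n-1}$ by a constant depending only on $r,R,p,q$ and $\|g\|_\infty$. Moreover, for $\mathcal{H}^{n-1}$-a.e.\ $u$, continuity of $g$, continuity of the maps $t\mapsto t^{-p},\,e^{-t^2/2},\,t^q$ on $[r,R]$, and the uniform convergence $h_{K_i}\to h_{K_0}$ combined with $\alpha_{K_i}(u)\to\alpha_{K_0}(u)$ give $h_{K_i}(\alpha_{K_i}(u))\to h_{K_0}(\alpha_{K_0}(u))$, and similarly for the remaining factors. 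Hence $\Phi_i(u)\to\Phi_0(u)$ a.e. Bounded convergence on the finite measure space $(\mathbb{S}^{n-1},du)$ then yields $\int\Phi_i\,du\to\int\Phi_0\,du$, which is the desired weak convergence.

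The only nontrivial step is the a.e.\ convergence $\alpha_{K_i}\to\alpha_{K_0}$; I would isolate this as a short lemma (or cite it from \cite{S}) since without it the continuity of $g$ alone would be insufficient. Everything else is a routine application of the uniform bounds and bounded convergence.
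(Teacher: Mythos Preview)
Your proposal is correct and follows essentially the same route as the paper: rewrite the pairing $\int g\,d\widetilde{C}_{\gamma,p,q}(K,\cdot)$ via the radial parametrization, use uniform two-sided bounds $rB\subset K_i\subset RB$, uniform convergence of $h_{K_i},\rho_{K_i}$, a.e.\ convergence of $\alpha_{K_i}\to\alpha_{K_0}$, and then dominated (bounded) convergence. The only difference is cosmetic: the paper cites \cite[Lemma~2.2]{HLYZ} for the a.e.\ convergence of the radial Gauss maps, whereas you sketch the standard subsequence argument directly.
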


\begin{proof}
Let $f: \omega\rightarrow\mathbb{R}$ be a continuous function. Suppose that $K_i\rightarrow K_0$ with respect to Hausdorff metric. Then
$\rho_{K_i}\rightarrow\rho_{K_0}$ uniformly on $\mathbb{S}^{n-1}$, and $\alpha_{K_i}\rightarrow\alpha_{K_0}$ almost everywhere on $\mathbb{S}^{n-1}$ with respect to spherical Lebesgue measure (by \cite[Lemma 2.2]{HLYZ}).
Moreover, $h_{K_i}(\alpha_{K_i},\cdot)\rightarrow
h_{K_0}(\alpha_{K_0},\cdot)$ uniformly on $\mathbb{S}^{n-1}$.
Since $K_0\in\mathcal{K}_o^n$, then there exists a constant $C>0$ such that $\frac{1}{C}B\subset K_i,K_0\subset CB$ for
sufficiently large $i$.
Thus
\begin{align*}
e^{-\frac{\rho_{K_i}^2(u)}{2}}\rho_{K_i}^q(u)
\frac{f(\alpha_{K_i}(u))^p}{h_{K_i}(\alpha_{K_i}(u)^p}
\rightarrow e^{-\frac{\rho_{K_0}^2(u)}{2}}\rho_{K_0}^q(u)
\frac{f(\alpha_{K_0}(u))^p}{h_{K_0}(\alpha_{K_0}(u)^p}
\end{align*}
uniformly on $\mathbb{S}^{n-1}$.

By Definition \ref{d3.3} and Lebesgue's dominated convergence theorem, one has
\begin{align*}
&\int_{\mathbb{S}^{n-1}}f(v)^pd\widetilde{C}_{\gamma,p,q}(K_i,v)\\
&=\int_{\partial^\prime {K_i}}f(\nu_{K_i}(x))^p(x\cdot\nu_{K_i}(x))^{1-p}
e^{-\frac{|x|^2}{2}}|x|^{q-n}d\mathcal{H}^{n-1}(x)\\
&=\int_{\mathbb{S}^{n-1}}f(\alpha_{K_i}(u))^p
h_{K_i}(\alpha_{K_i}(u))^{-p}
e^{-\frac{\rho_{K_i}^2(u)}{2}}\rho_{K_i}^q(u)du\\
&\rightarrow\int_{\mathbb{S}^{n-1}}f(\alpha_{K_0}(u))^p
h_{K_0}(\alpha_{K_0}(u))^{-p}
e^{-\frac{\rho_{K_0}^2(u)}{2}}\rho_{K_0}^q(u)du\\
&=\int_{\mathbb{S}^{n-1}}f(\alpha_{K_0}(u))^p
e^{-\frac{\rho_{K_0}^2(u)}{2}}\rho_{K_0}(u)^{q-n}
\frac{\rho_{K_0}(u)^n}
{h_{K_0}(\alpha_{K_0}(u))^{p}}du\\
&=\int_{\partial^\prime {K_0}}f(\nu_{K_0}(x))^p
(x\cdot\nu_{K_0}(x))^{1-p}
e^{-\frac{|x|^2}{2}}|x|^{q-n}d\mathcal{H}^{n-1}(x)\\
&=\int_{\mathbb{S}^{n-1}}f(v)^p
d\widetilde{C}_{\gamma,p,q}({K_0},v).
\end{align*}
The proof is completed.
\end{proof}

\begin{proposition}\label{p3.5}
For $p\neq0$, $q\in\mathbb{R}$ and $K\in\mathcal{K}_o^n$, the $L_p$-Gauss dual curvature measure $\widetilde{C}_{\gamma,p,q}(K,\cdot)$ is absolutely continuous with respect to the surface area measure $S(K,\cdot)$.
\end{proposition}

\begin{proof}
For any Borel set $\omega\subset\mathbb{S}^{n-1}$ and $K\in\mathcal{K}_o^n$, the surface area measure $S(K,\cdot)$ of $K$ is defined as (see \cite{S})
$$S(K,\omega)=\mathcal{H}^{n-1}(\nu_K^{-1}(\omega)).$$

Let $S(K,\omega)=0$. It is equivalent to $\mathcal{H}^{n-1}(\nu_K^{-1}(\omega))=0$.
Since we are integrating over a set of measure zero, it follows that
$$\widetilde{C}_{\gamma,p,q}(K,\omega)
=\int_{\nu_K^{-1}(\omega)}(x\cdot\nu_K(x))^{1-p}
e^{-\frac{|x|^2}{2}}|x|^{q-n}d\mathcal{H}^{n-1}(x)=0.$$
This implies that
$\widetilde{C}_{\gamma,p,q}(K,\cdot)$ is absolutely continuous with respect to the surface area measure $S(K,\cdot)$.
\end{proof}

{\bf The $L_p$-Gauss dual Minkowski problem}.
For $p,q\in\mathbb{R}$,
under what necessary and/or sufficient condition on a non-zero finite Borel measure $\mu$ on $\mathbb{S}^{n-1}$ does there exist a set $K\in\mathcal{K}_o^n$ such that
\begin{align}\label{3.5}
\mu=\widetilde{C}_{\gamma,p,q}(K,\cdot)?
\end{align}
If $K$ exists, to what extent is it unique?

As we all know, if $\partial K$ is smooth with positive Gauss curvature, then the surface area measure $S(K,\cdot)$ of $K$ is absolutely continuous with respect to spherical Lebesgue measure $S$ and its density is
\begin{align}\label{3.6}
\frac{dS(K,\cdot)}{dS}=\det(\nabla^2h_K+h_KI).
\end{align}

Assume $K$ is strictly convex. By Proposition \ref{p3.5} and (\ref{2.5}), one has
\begin{align*}
\widetilde{C}_{\gamma,p,q}(K,\eta)&=\int_{\nu_K^{-1}(\eta)}
(x\cdot\nu_K(x))^{1-p}|x|^{q-n}
e^{-\frac{|x|^2}{2}}d\mathcal{H}^{n-1}(x)\\
&=\int_{\eta}h_K(v)^{1-p}(|\nabla h_K(v)|^2+h_K(v)^2)^{\frac{q-n}{2}}e^{-\frac{|\nabla h_K(v)|^2+h_K(v)^2}{2}}dS(K,v).
\end{align*}
Namely,
\begin{align}\label{3.7}
d\widetilde{C}_{\gamma,p,q}(K,\cdot)
=h_K^{1-p}(|\nabla h_K|^2+h_K^2)^{\frac{q-n}{2}}e^{-\frac{|\nabla h_K|^2+h_K^2}{2}}dS(K,\cdot).
\end{align}
Let $\partial K$ is smooth with positive Gauss curvature for $K\in\mathcal{K}_o^n$. By (\ref{3.6}) and (\ref{3.7}), we have
$$\frac{d\widetilde{C}_{\gamma,p,q}(K,\cdot)}{dS}
=h_K^{1-p}(|\nabla h_K|^2+h_K^2)^{\frac{q-n}{2}}e^{-\frac{|\nabla h_K|^2+h_K^2}{2}}\det(\nabla^2h_K+h_KI).$$
Therefore, when the given measure $\mu$ has a density $f$ that is an integral nonnegative function on $\mathbb{S}^{n-1}$, the $L_p$-Gauss dual Minkowski problem is equivalent to solving the following Monge-Amp\`{e}re type equation
\begin{align*}
e^{-\frac{|\nabla h_K|^2+h_K^2}{2}}h_K^{1-p}
(|\nabla h_K|^2+h_K^2)^{\frac{q-n}{2}}
\det(\nabla^2h_K+h_KI)=f.
\end{align*}

\section{The variational method to generate weak solution}\label{S4}
This section aims to prove Theorem \ref{t1.2} by transforming it into an optimization problem.
\subsection{An associated optimization problem}
Let $\mathcal{K}_e^n$ be the set of origin-symmetric convex bodies. For any non-zero finite Borel measure $\mu$ on $\mathbb{S}^{n-1}$, $p>0$ and $q>0$. Define $\Phi: \mathcal{K}_e^n\rightarrow\mathbb{R}$ by
$$\Phi(Q)=\frac{1}{p}\int_{\mathbb{S}^{n-1}}h_Q(v)^pd\mu(v)
+\int_{\mathbb{S}^{n-1}}F(\rho_Q(u))du, \ for \ Q\in\mathcal{K}_e^n,$$
where $F(\rho_Q(u))=\int_{\rho_Q(u)}^{\infty}
e^{-\frac{r^2}{2}}r^{q-1}dr$.
We consider the minimization problem as follows:
\begin{align}\label{4.0}
\inf\{\Phi(Q):Q\in\mathcal{K}_e^n\}.
\end{align}

\begin{lemma}\label{l4.1}
Let $\mu$ be a non-zero finite Borel measure on $\mathbb{S}^{n-1}$. If $p>0, q>0$ and $K\in\mathcal{K}_e^n$ satisfies
\begin{align}\label{4.1}
\Phi(K)=\inf\{\Phi(Q):Q\in\mathcal{K}_e^n\},
\end{align}
then
$$\mu=\widetilde{C}_{\gamma,p,q}(K,\cdot).$$
\end{lemma}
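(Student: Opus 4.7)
The plan is to carry out the standard Aleksandrov-type variational argument, using the minimality of $K$ against a one-parameter family of Wulff-shape competitors. Given any continuous even function $g\colon\mathbb{S}^{n-1}\to\mathbb{R}$, I would set $h_t(v)=(h_K(v)^p+tg(v))^{1/p}$ for $|t|$ small enough that $h_K^p+tg>0$, and consider the origin-symmetric Wulff shape $[h_t]\in\mathcal{K}_e^n$ (symmetry follows because $h_K$ and $g$ are even). Noting that identifying $g=f^p$ this matches the setup of Lemma \ref{l3.1} and Theorem \ref{t3.2}, I can reuse their derivative formulas.

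The key mechanism is the one-sided inequality $h_{[h_t]}\le h_t$, which since $p>0$ yields $h_{[h_t]}^p\le h_t^p=h_K^p+tg$. Together with $[h_0]=K$, this gives the upper bound
\begin{align*}
\Phi([h_t])-\Phi(K)\le \frac{t}{p}\int_{\mathbb{S}^{n-1}}g\,d\mu+\int_{\mathbb{S}^{n-1}}\bigl(F(\rho_{[h_t]}(u))-F(\rho_K(u))\bigr)\,du.
\end{align*}
By Lemma \ref{l3.1} (applied with $g=f^p$), the radial functions $\rho_{[h_t]}$ are uniformly Lipschitz in $t$ near $0$ and have a pointwise derivative almost everywhere, so exactly the change-of-variables and dominated-convergence calculation that proves Theorem \ref{t3.2} shows
\begin{align*}
\lim_{t\to 0}\frac{1}{t}\int_{\mathbb{S}^{n-1}}\bigl(F(\rho_{[h_t]}(u))-F(\rho_K(u))\bigr)\,du=-\frac{1}{p}\int_{\mathbb{S}^{n-1}}g(v)\,d\widetilde{C}_{\gamma,p,q}(K,v).
\end{align*}

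Now I would invoke (\ref{4.1}): since $K$ minimizes $\Phi$ on $\mathcal{K}_e^n$ and each $[h_t]\in\mathcal{K}_e^n$, we have $\Phi([h_t])-\Phi(K)\ge 0$. Dividing the displayed upper bound by $t>0$ and letting $t\to 0^+$ yields
\begin{align*}
0\le \frac{1}{p}\int_{\mathbb{S}^{n-1}}g\,d\mu-\frac{1}{p}\int_{\mathbb{S}^{n-1}}g\,d\widetilde{C}_{\gamma,p,q}(K,\cdot);
\end{align*}
dividing by $t<0$ (which flips the inequality) and letting $t\to 0^-$ yields the reverse. Hence equality holds for every even $g\in C(\mathbb{S}^{n-1})$. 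Since $\widetilde{C}_{\gamma,p,q}(K,\cdot)$ is an even measure (because $K\in\mathcal{K}_e^n$) and $\mu$ is even in the intended application, testing against even continuous functions determines both measures, giving $\mu=\widetilde{C}_{\gamma,p,q}(K,\cdot)$.

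The main obstacle is precisely the asymmetry of $h_{[h_t]}\le h_t$: one cannot differentiate $\int h_{[h_t]}^p\,d\mu$ directly since the inequality can be strict. The trick used above sidesteps this by replacing $h_{[h_t]}^p$ by its upper envelope $h_t^p$ in the bound, so the derivative calculation for the first term becomes trivial, and then the two-sided squeeze from $t\to 0^{\pm}$ forces the variational identity. A secondary technical point is verifying that Lemma \ref{l3.1}'s pointwise-plus-Lipschitz estimates for $\rho_{[h_t]}$ justify differentiating under the integral sign in the $F$-term, which follows exactly as in the proof of Theorem \ref{t3.2}.
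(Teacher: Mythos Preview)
Your argument is correct and is essentially the paper's own proof: the paper introduces the auxiliary functional $\phi(f)=\frac{1}{p}\int f^p\,d\mu+\int F(\rho_{[f]})\,du$ on $C_e^+(\mathbb{S}^{n-1})$, observes via $h_{[f]}\le f$ that $h_K$ minimizes $\phi$ (which is exactly your one-sided bound), and then differentiates at $t=0$ to obtain the Euler--Lagrange identity. The only cosmetic difference is that the paper perturbs logarithmically, $h_t=h_Ke^{tg}$, and cites \cite[Lemma~3.1]{F} for the derivative of the $F$-term, whereas you use the $L_p$-perturbation $h_t=(h_K^p+tg)^{1/p}$ and invoke the paper's own Lemma~\ref{l3.1}/Theorem~\ref{t3.2}; both routes lead to the same conclusion for every even continuous test function.
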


\begin{proof}
For each $f\in C^+_e(\mathbb{S}^{n-1})$, define a functional $\phi: C^+_e(\mathbb{S}^{n-1})\rightarrow\mathbb{R}$ by
$$\phi(f)=\frac{1}{p}\int_{\mathbb{S}^{n-1}}f(v)^pd\mu(v)
+\int_{\mathbb{S}^{n-1}}F(\rho_{[f]})du.$$

We claim that for each $f\in C^+_e(\mathbb{S}^{n-1})$
\begin{align}\label{4.2}
\phi(f)\leq\phi(h_K).
\end{align}
Since $h_{[f]}\leq f$ and $[h_K]=K$, one has
$$\phi(f)\leq\phi(h_{[f]})=\Phi([f])\leq \Phi(K)=\phi(h_k),$$
where the second inequality sign follows from (\ref{4.1}).

For any $g\in C(\mathbb{S}^{n-1})$ and $t\in(-\varepsilon,\varepsilon)$ where $\varepsilon>0$ is sufficiently small. Let
$$h_t(v)=h_K(v)e^{tg(v)}.$$
By (\ref{4.2}), since $h_K$ is a minimizer of (\ref{4.1}), the Lagrange multiplier method says,
$$\frac{d}{dt}\phi(h_t)|_{t=0}=0.$$
This, together with the formula in \cite[Lemma 3.1]{F}, implies
$$\int_{\mathbb{S}^{n-1}}h_K(v)^pg(v)d\mu(v)
=\int_{\mathbb{S}^{n-1}}g(v)d\widetilde{C}_{\gamma,q}(K,v).$$
Thus
$$d\mu(v)=h_K(v)^{-p}d\widetilde{C}_{\gamma,q}(K,v).$$
We conclude that $\mu=\widetilde{C}_{\gamma,p,q}(K,\cdot)$.
\end{proof}

\subsection{Existence of an optimizer}

\begin{lemma}\label{l4.2}
Let $\mu$ be a non-zero finite even Borel measure on $\mathbb{S}^{n-1}$ and not concentrated in any closed hemisphere. If $p>0$ and $q>0$, then there exists a convex body $K\in\mathcal{K}_e^n$ such that
\begin{align*}
\Phi(K)=\inf\{\Phi(Q):Q\in\mathcal{K}_e^n\}.
\end{align*}
\end{lemma}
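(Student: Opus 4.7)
The plan is to extract a convergent subsequence from a minimizing sequence by Blaschke's selection theorem, then verify that the limit body is genuinely full-dimensional. Fix a minimizing sequence $\{K_i\} \subset \mathcal{K}_e^n$ with $\Phi(K_i) \to \inf\{\Phi(Q) : Q \in \mathcal{K}_e^n\}$.

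The first key step is a uniform diameter bound on $\{K_i\}$. Since $\mu$ is even and not concentrated on any closed hemisphere, the continuous function $u \mapsto \int_{\mathbb{S}^{n-1}} |u \cdot v|^p\, d\mu(v)$ is strictly positive on $\mathbb{S}^{n-1}$, hence bounded below by some constant $c(\mu,p) > 0$. Because each $K_i$ is origin-symmetric, if $\max_v h_{K_i}(v) \to \infty$ then one picks $x_i \in K_i$ realizing this maximum, uses $[-x_i,x_i] \subset K_i$ together with $h_{K_i}(v) \geq |x_i \cdot v|$, and concludes $\int h_{K_i}^p\, d\mu \geq c(\mu,p)|x_i|^p \to \infty$. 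Since $\widetilde{V}_{\gamma,q}(K_i) \geq 0$, this would contradict $\Phi(K_i) \to \inf \Phi < \infty$. Thus $\{K_i\}$ lies inside a fixed ball $RB$, and Blaschke's selection theorem furnishes a Hausdorff-convergent subsequence with limit an origin-symmetric closed convex set $K_0 \subset RB$.

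The crux is to rule out $K_0$ being lower-dimensional. I would establish that $\inf \Phi$ is strictly less than the ``degenerate value''
$$C_0 := \int_{\mathbb{R}^n} e^{-|x|^2/2}|x|^{q-n}\,dx = |\mathbb{S}^{n-1}|\int_0^\infty e^{-r^2/2}r^{q-1}\,dr,$$
by exhibiting an explicit test body $Q_* \in \mathcal{K}_e^n$ with $\Phi(Q_*) < C_0$; the natural candidate is a ball $sB$ with $s>0$ chosen to make
$$\Phi(sB) - C_0 = \frac{s^p}{p}\mu(\mathbb{S}^{n-1}) - |\mathbb{S}^{n-1}|\int_0^s e^{-r^2/2}r^{q-1}\,dr$$
negative (for $p>q$ any small $s$ suffices, while the remaining range requires a more careful choice of $s$ exploiting the finiteness of $\mu$). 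If instead $\dim K_0 < n$, then $K_0$ has Lebesgue measure zero, so $\widetilde{V}_{\gamma,q}(K_i) \to C_0$ and $\int h_{K_i}^p\,d\mu \to \int h_{K_0}^p\,d\mu \geq 0$, forcing $\Phi(K_i) \to \frac{1}{p}\int h_{K_0}^p\,d\mu + C_0 \geq C_0 > \inf \Phi$, a contradiction. Hence $K_0 \in \mathcal{K}_e^n$.

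Finally, continuity of $\Phi$ under Hausdorff convergence of non-degenerate convex bodies---which follows from the uniform convergence $h_{K_i} \to h_{K_0}$ and $\rho_{K_i} \to \rho_{K_0}$ on $\mathbb{S}^{n-1}$ together with dominated convergence exactly as in the proof of Proposition \ref{p3.4}---yields $\Phi(K_0) = \lim_i \Phi(K_i) = \inf \Phi$. I expect the non-degeneracy step to be the main obstacle, as it requires producing the sub-$C_0$ test body by delicately balancing the $L^p$ support-function term against the Gaussian dual-volume term for arbitrary $p,q > 0$.
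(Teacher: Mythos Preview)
Your boundedness argument and use of Blaschke selection match the paper's proof essentially line for line; the paper uses $(v\cdot u_l)_+^p$ where you use $|v\cdot u|^p$, which for even $\mu$ differ only by a factor of $2$. The substantive divergence is the non-degeneracy step, which you correctly flag as the crux.

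Your proposed mechanism---exhibiting a ball $sB$ with $\Phi(sB)<C_0$---works only in part of the claimed range. For small $s$ one has
\[
\Phi(sB)-C_0=\frac{\mu(\mathbb{S}^{n-1})}{p}\,s^{p}-|\mathbb{S}^{n-1}|\int_0^s e^{-r^2/2}r^{q-1}\,dr\ \sim\ \frac{\mu(\mathbb{S}^{n-1})}{p}\,s^{p}-\frac{|\mathbb{S}^{n-1}|}{q}\,s^{q},
\]
which is negative for small $s$ precisely when $p>q$. When $p<q$ the expression is positive near $0$ and tends to $+\infty$ as $s\to\infty$; a direct calculus check shows that if $\mu(\mathbb{S}^{n-1})>|\mathbb{S}^{n-1}|\,(q-p)^{(q-p)/2}e^{-(q-p)/2}$ then $\Phi(sB)>C_0$ for \emph{every} $s>0$. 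The case $p=q$ with $\mu(\mathbb{S}^{n-1})\ge |\mathbb{S}^{n-1}|$ fails similarly. ``Finiteness of $\mu$'' imposes no upper bound on $\mu(\mathbb{S}^{n-1})$, so it cannot rescue the argument, and replacing $sB$ by a general dilate $sL$ of a fixed body reproduces the same $s^p$--versus--$s^q$ competition. Hence for $p\le q$ the strategy of beating the degenerate value $C_0$ with a single test body does not go through as stated.

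For context, the paper's own proof does not supply a genuine non-degeneracy argument either: it passes from ``the limit $K$ is origin-symmetric'' directly to ``$K$ contains the origin as its interior point,'' which is not a valid implication without separately ruling out collapse to a lower-dimensional set. Your instinct that this is the delicate point is therefore well placed, but the ball test resolves it only for $p>q$.
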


\begin{proof}
Suppose $Q_l$ is a sequence of origin-symmetric convex
bodies and
\begin{align}\label{4.3}
\lim_{l\rightarrow\infty}\Phi(Q_l)
=\inf\{\Phi(Q):Q\in\mathcal{K}_e^n\}.
\end{align}

We claim that $Q_l$ is uniformly bounded. If not, then there exists $u_l\in\mathbb{S}^{n-1}$ such that $\rho_{Q_l}(u_l)\rightarrow\infty$ as $l\rightarrow\infty$. By the definition of support function, one has
$$\rho_{Q_l}(u_l)(v\cdot u_l)_+\leq h_{Q_l}(v).$$
From the definitions of $\Phi$ and Gamma function, we have
\begin{align*}
\Phi(Q_l)&=\frac{1}{p}\int_{\mathbb{S}^{n-1}}h_{Q_l}(v)^pd\mu(v)
+n\omega_n\int_{\rho_{Q_l}(v)}^\infty e^{-\frac{r^2}{2}}r^{q-1}dr\\
&\geq \frac{1}{p}\int_{\mathbb{S}^{n-1}}
\rho_{Q_l}(u_l)^p(v\cdot u_l)_+^pd\mu(v)
+n\omega_n\frac{\sqrt{2}^q}{2}
\int_{\frac{\rho_{Q_l}(v)}{2}}^\infty
e^{-r}r^{\frac{q}{2}-1}dr\\
&=\frac{\rho_{Q_l}(u_l)^p}{p}\int_{\mathbb{S}^{n-1}}
(v\cdot u_l)_+^pd\mu(v)+n\omega_n
\frac{\sqrt{2}^q}{2}\Gamma(\frac{q}{2})\\
&\geq\frac{\rho_{Q_l}(u_l)^p}{p}\int_{\mathbb{S}^{n-1}}
(v\cdot u_l)_+^pd\mu(v)+n\omega_n,
\end{align*}
where the last inequality follows from the property of Gamma function $\Gamma$, i.e., $\frac{\sqrt{2}^q}{2}\Gamma(\frac{q}{2})\geq1$, and $(t)_+=\max\{t,0\}$ for any $t\in\mathbb{R}$.
By the fact that $\mu$ is not concentrated in any closed hemisphere, we may find $c_0>0$ such that
$$\int_{\mathbb{S}^{n-1}}
(v\cdot u_l)_+^pd\mu(v)>c_0.$$
Therefore
$$\Phi(Q_l)>\frac{c_0\rho_{Q_l}(u_l)^p}{p}
+n\omega_n\rightarrow\infty$$
as $l \rightarrow\infty$. But this contradicts (\ref{4.3}). So we conclude that $Q_l$ is uniformly bounded. By the Blaschke selection theorem in \cite{S}, there is a convergent subsequence of $Q_l$, still denoted by $Q_l$, converges to a compact convex set $K$ of $\mathbb{R}^n$.

Since $K$ is origin-symmetric, this means that $K$ contains the origin as its interior point. Therefore, the convex body $K$ is a minimizer to the optimization (\ref{4.0}).
\end{proof}

\subsection{Existence of symmetric solution to $L_p$-Gaussian dual  Minkowski problem}
Combining Lemma \ref{l4.1} and Lemma \ref{l4.2}, we get the existence Theorem \ref{t1.2} for the $L_p$-Gaussian dual Minkowski problem stated in the introduction.

\begin{theorem}\label{t4.3}
Let $\mu$ be a non-zero finite even Borel measure on $\mathbb{S}^{n-1}$ and not concentrated in any closed hemisphere. If $p>0$ and $q>0$, then there exists a convex body $K\in\mathcal{K}_e^n$ such that
\begin{align*}
\mu=\widetilde{C}_{\gamma,p,q}(K,\cdot).
\end{align*}
\end{theorem}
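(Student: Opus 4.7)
The plan is to realize the desired $K$ as a minimizer of the functional $\Phi$ in~(\ref{4.0}) and to extract the measure identity from the first-order optimality condition. Because the excerpt has already established both Lemma~\ref{l4.1} (the Euler--Lagrange identification) and Lemma~\ref{l4.2} (the existence of a minimizer), Theorem~\ref{t4.3} reduces to assembling these two statements: no further ingredients are required.

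First I would invoke Lemma~\ref{l4.2} to produce $K\in\mathcal{K}_e^n$ with $\Phi(K)=\inf_{Q\in\mathcal{K}_e^n}\Phi(Q)$. The heart of this step is the uniform boundedness of a minimizing sequence $\{Q_l\}$: the non-concentration hypothesis on $\mu$ ensures that $\inf_{u\in\mathbb{S}^{n-1}}\int_{\mathbb{S}^{n-1}}(v\cdot u)_+^p\,d\mu(v)>0$, so the estimate $h_{Q_l}(v)\geq\rho_{Q_l}(u_l)(v\cdot u_l)_+$ would force $\tfrac{1}{p}\int h_{Q_l}^p\,d\mu\to\infty$ along any blow-up sequence, contradicting the boundedness of $\Phi(Q_l)$. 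Blaschke selection then extracts an origin-symmetric Hausdorff limit, and the class of origin-symmetric convex bodies with non-empty interior is the required $\mathcal{K}_e^n$-membership for $K$.

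Second I would apply Lemma~\ref{l4.1}. Starting from $K$ as a minimizer, the logarithmic perturbation $h_t=h_K e^{tg}$ with even $g\in\mathbb{C}(\mathbb{S}^{n-1})$ keeps $h_t$ the support function of an origin-symmetric body for small $t$; the Lagrange condition $\tfrac{d}{dt}\phi(h_t)|_{t=0}=0$, combined with the first variation formula for $\widetilde{V}_{\gamma,q}$ recorded in \cite[Lemma~3.1]{F}, yields the identity $\int h_K^p g\,d\mu=\int g\,d\widetilde{C}_{\gamma,q}(K,\cdot)$ for every even $g$. Because $\mu$ and $\widetilde{C}_{\gamma,q}(K,\cdot)$ are both even, this identity extends to arbitrary $g\in\mathbb{C}(\mathbb{S}^{n-1})$, and dividing by $h_K^p$ (bounded below away from $0$ on $\mathbb{S}^{n-1}$) together with the relation (\ref{3.7}) between $\widetilde{C}_{\gamma,q}$ and $\widetilde{C}_{\gamma,p,q}$ delivers $\mu=\widetilde{C}_{\gamma,p,q}(K,\cdot)$.

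The main obstacle is located inside Lemma~\ref{l4.2}, specifically the non-degeneracy of the limit $K$: escape to infinity is killed cleanly by the non-concentration hypothesis, but collapse of $K$ to a lower-dimensional origin-symmetric set is subtle because $\int_0^\infty e^{-r^2/2}r^{q-1}\,dr$ is finite for $q>0$ and so $\Phi$ remains finite on such degenerations, ruling out a crude blow-up dismissal. The natural fix is a competitor argument that inflates the candidate limit by a small ball in the orthogonal complement of its affine hull, whose success depends on balancing the rates $\varepsilon^p$ and $\varepsilon^q$ in the expansion of $\Phi$; this is the step I would scrutinize most carefully before declaring the proof complete. Once that non-degeneracy is in hand, the rest is the mechanical chaining of Lemmas~\ref{l4.1} and~\ref{l4.2} described above.
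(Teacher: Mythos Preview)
Your plan coincides with the paper's: Theorem~\ref{t4.3} is presented there as nothing more than the combination of Lemma~\ref{l4.1} and Lemma~\ref{l4.2}, with no additional argument.

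The non-degeneracy issue you single out in Lemma~\ref{l4.2} is real and is \emph{not} resolved by the paper either. Its proof ends with the sentence ``Since $K$ is origin-symmetric, this means that $K$ contains the origin as its interior point,'' which is not a valid inference (any origin-symmetric segment, or more generally any lower-dimensional origin-symmetric convex set, contains the origin but not as an interior point of $\mathbb{R}^n$). So you are being more careful than the source: a competitor comparison of the type you sketch---or a direct check that $\inf_{Q\in\mathcal{K}_e^n}\Phi(Q)$ lies strictly below the value $\Phi$ would take on a degenerate limit, using $F(0)-F(r)\sim r^{q}/q$ as $r\to 0^+$---is genuinely required to close the argument, and the paper does not supply it.
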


\section{The curvature flow method to generate smooth  solution}\label{S5}
In this section, the existence of smooth solution to the $L_p$-Gauss dual Minkowski problem will be obtained by curvature flow method.

\subsection{Gauss curvature flow and its related geometric function}

We consider a class of Gauss curvature flow of a family of closed convex hypersurfaces $\{\Omega_t\}$ given by
$\Omega_t=F(\mathbb{S}^{n-1},t)$,
where $F: \mathbb{S}^{n-1}\times [0,T)\rightarrow\mathbb{R}^n$ is the smooth map that satisfies
\begin{align}\label{5.1}
\begin{cases}
\frac{\partial X(x,t)}{\partial t}=-f(x)(x\cdot F(x,t))^{p}e^{\frac{|F(x,t)|^2}{2}}
|F(x,t)|^{n-q}\mathcal{K}(x,t)\nu+F(x,t),\\
F(x,0)=F_0(x),
\end{cases}
\end{align}
where $f$ is a given positive smooth function on $\mathbb{S}^{n-1}$, $\mathcal{K}$ is the Gauss curvature of $\Omega_t$ at $F(x,t)$, $\nu$ is the out normal of $\Omega_t$ at $F(x,t)$,
and $T$ is the maximal time for which the solution of (\ref{5.1}) exists.

As discussed in Section 2, the support function of convex hypersurface $\Omega_t$ can be expressed as $h(x,t)=x\cdot F(x,t)$, we thus derive the evolution equation for $h(\cdot,t)$ from the flow (\ref{5.1}) as follows
\begin{align}\label{5.2}
\begin{cases}
\frac{\partial h(x,t)}{\partial t}
=-f(x)h^{p}e^{\frac{|\nabla h|^2+h^2}{2}}
(|\nabla h|^2+h^2)^{\frac{n-q}{2}}\mathcal{K}
+h(x,t),\\
h(x,0)=h_0(x),
\end{cases}
\end{align}

From (\ref{2.5}) and the definition of radial function, there is
\begin{align}\label{5.3}
\rho(u)u=\nabla h(x)+h(x)x.
\end{align}
Let $x=x(u,t)$, we have
$$\log\rho(u,t)=\log h(x,t)-\log x\cdot u.$$
Differentiating the above identity, we obtain
\begin{align}\label{5.4}
\frac{\partial\rho(u,t)}{\partial t}
=\frac{\rho(u,t)}{h(x,t)}\frac{\partial h(x,t)}{\partial t}.
\end{align}
By (\ref{5.2}), (\ref{5.3}) and (\ref{5.4}), the evolution equation of $\rho(u,t)$ can be described as
\begin{align}\label{5.5}
\begin{cases}
\frac{\partial\rho(u,t)}{\partial t}
=-f(\xi)h^{p-1}\rho^{n-q+1}
e^{\frac{\rho^2}{2}}\mathcal{K}+\rho(u,t),\\
\rho(u,0)=\rho_0(u).
\end{cases}
\end{align}

Now we investigate the characteristic of a more important geometric functional that are key to proving the long-time existence of solutions to the flow (\ref{5.1}). Firstly, we define
$$\phi(r)=\int^{r}s^{q-1}e^{-\frac{s^2}{2}}ds,$$
and
$$\varphi(t)=\int^t\frac{1}{s^{1-p}}ds.$$

\begin{lemma}\label{l5.1}
For convex hypersurface $\Omega_t$, we define the functional
$$\Phi(\Omega_t)=\int_{\mathbb{S}^{n-1}}f(x)
\varphi(h(x,t))dx-\int_{\mathbb{S}^{n-1}}\phi(\rho(u,t))du.$$
Let $p,q\in\mathbb{R}$. Then $\Phi(\Omega_t)$   is monotone non-increasing along the flow (\ref{5.1}). That is
$$\frac{\partial}{\partial t}\Phi(\Omega_t)\leqslant0,$$
with equality if and only if the support function of $\Omega_t$ satisfies the equation (\ref{1.2}).
\end{lemma}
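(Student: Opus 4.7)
The plan is to differentiate $\Phi(\Omega_t)$ under the integral sign, substitute the evolution equations (\ref{5.2}) and (\ref{5.5}), convert the $u$-integral to an $x$-integral via the radial Gauss map, and then exhibit the resulting expression as the negative of a squared integrand. Throughout, the smoothness and strict convexity of $\Omega_t$ along the flow ensure that $u\mapsto x=\nu_{\Omega_t}(\rho(u)u)$ is a smooth diffeomorphism, so every change of variables that follows is legitimate.

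First, since $\varphi'(s)=s^{p-1}$ and $\phi'(r)=r^{q-1}e^{-r^2/2}$ by the definitions of $\varphi$ and $\phi$,
\begin{align*}
\frac{d}{dt}\Phi(\Omega_t)=\int_{\mathbb{S}^{n-1}}f(x)\,h^{p-1}h_t\,dx-\int_{\mathbb{S}^{n-1}}\rho^{q-1}e^{-\rho^2/2}\,\rho_t\,du.
\end{align*}
I would then substitute (\ref{5.2}) and (\ref{5.5}) into this expression, producing four summands. To reduce the two $u$-integrals to $x$-integrals, I would invoke the cone-volume identity $\rho(u)^n\,du=h(x)\det(\nabla^2 h+h I)\,dx$ (which holds pointwise under the radial Gauss map), together with $\rho^2=|\nabla h|^2+h^2$ from (\ref{5.3}) and $\mathcal{K}=1/\det(\nabla^2h+hI)$. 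A short computation gives $\int_{\mathbb{S}^{n-1}}f(\xi)h^{p-1}\rho^n\mathcal{K}\,du=\int_{\mathbb{S}^{n-1}}f(x)h^p\,dx$, which merges with the $\int f h^p\,dx$ term from the first integral, leaving three terms in total.

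To recognize the residue as a signed square, I would set
\begin{align*}
A(x)&:=f(x)\,h^{p-\frac{1}{2}}(|\nabla h|^2+h^2)^{(n-q)/4}e^{(|\nabla h|^2+h^2)/4}\mathcal{K}^{1/2},\\
B(x)&:=h^{1/2}(|\nabla h|^2+h^2)^{(q-n)/4}e^{-(|\nabla h|^2+h^2)/4}\mathcal{K}^{-1/2},
\end{align*}
and verify directly that $AB=f h^p$, $A^2=f^2h^{2p-1}(|\nabla h|^2+h^2)^{(n-q)/2}e^{(|\nabla h|^2+h^2)/2}\mathcal{K}$, and $B^2=h(|\nabla h|^2+h^2)^{(q-n)/2}e^{-(|\nabla h|^2+h^2)/2}\det(\nabla^2h+hI)$. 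These match the three surviving summands exactly, so $\frac{d}{dt}\Phi(\Omega_t)=-\int_{\mathbb{S}^{n-1}}(A-B)^2\,dx\le 0$. Equality forces $A=B$ pointwise on $\mathbb{S}^{n-1}$, and clearing square roots gives $f h^{p-1}(|\nabla h|^2+h^2)^{(n-q)/2}e^{(|\nabla h|^2+h^2)/2}\mathcal{K}=1$, which rearranges to precisely equation (\ref{1.2}).

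The main obstacle is purely algebraic: arranging the exponents of $h$, $(|\nabla h|^2+h^2)$, $\mathcal{K}$ and $f$ across the three surviving terms so that they telescope into a perfect square. Guessing the correct splitting $(A,B)$ is the only step requiring insight; the remaining verification, along with the justification of differentiation under the integral sign on the interval of existence (via uniform $C^2$-bounds on $h$ and $\rho$), is routine.
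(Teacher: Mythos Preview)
Your proposal is correct and follows essentially the same route as the paper: differentiate under the integral, use $du=\dfrac{h}{\rho^{n}\mathcal{K}}\,dx$ together with $\rho_t=\dfrac{\rho}{h}h_t$ to pass to an $x$-integral, and recognize the result as a nonpositive perfect square whose vanishing is exactly equation~(\ref{1.2}). The only cosmetic difference is that the paper writes the square as $-\int_{\mathbb{S}^{n-1}}\dfrac{h\rho^{q-n}e^{-\rho^2/2}}{\mathcal{K}}\Big(\dfrac{f}{h^{1-p}}\dfrac{\mathcal{K}}{\rho^{q-n}e^{-\rho^2/2}}-1\Big)^{2}\,dx$, which is your $-\int(A-B)^2\,dx$ after noting $B^2=\dfrac{h\rho^{q-n}e^{-\rho^2/2}}{\mathcal{K}}$ and $A/B=\dfrac{f}{h^{1-p}}\dfrac{\mathcal{K}}{\rho^{q-n}e^{-\rho^2/2}}$.
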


\begin{proof}
By (\ref{5.2}) and (\ref{5.4}), it follows from $du=\frac{h}{\rho^n\mathcal{K}}dx$ that
\begin{align*}
\partial_t\Phi(\Omega_t)&=\int_{\mathbb{S}^{n-1}}
\frac{f(x)}{h^{1-p}}\partial_thdx-\int_{\mathbb{S}^{n-1}}
\rho^{q-1}e^{-\frac{\rho^2}{2}}\partial_t\rho du\\
&=\int_{\mathbb{S}^{n-1}}\frac{f(x)}{h^{1-p}}\partial_thdx
-\int_{\mathbb{S}^{n-1}}
\frac{\rho^{q-n}e^{-\frac{\rho^2}{2}}}
{\mathcal{K}}\partial_thdx\\
&=\int_{\mathbb{S}^{n-1}}\frac{\rho^{q-n}e^{-\frac{\rho^2}{2}}}
{\mathcal{K}}\bigg(\frac{f}{h^{1-p}}\frac{\mathcal{K}} {\rho^{q-n}e^{-\frac{\rho^2}{2}}}-1\bigg)\partial_thdx\\
&=-\int_{\mathbb{S}^{n-1}}\frac{h\rho^{q-n}e^{-\frac{\rho^2}{2}}}
{\mathcal{K}}\bigg(\frac{f}{h^{1-p}}\frac{\mathcal{K}} {\rho^{q-n}e^{-\frac{\rho^2}{2}}}-1\bigg)^2dx\\
&\leq0.
\end{align*}
Equality hold if and only if $$\frac{f}{h^{1-p}}\frac{\mathcal{K}} {\rho^{q-n}e^{-\frac{\rho^2}{2}}}=1,$$
that is the equation (\ref{1.2}).
\end{proof}

\subsection{A priori estimates}

\subsubsection{$C^0,C^1$-estimates}

\begin{lemma}\label{l5.2}
Let $h(\cdot,t)$ be a smooth solution of (\ref{5.2}). Assume that $f$ is a positive smooth function on $\mathbb{S}^{n-1}$ satisfying (\ref{1.3}) and $p, q\in\mathbb{R}$. Then there exist positive constants c and C independent of $t$ such that
\begin{align}\label{5.6}
c\leqslant h(\cdot,t)\leqslant C \ \ and \ \
c\leqslant \rho(\cdot,t)\leqslant C, \ \forall
 (\cdot,t)\in \mathbb{S}^{n-1}\times [0,T).
\end{align}
\end{lemma}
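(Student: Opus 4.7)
The plan is to run a parabolic maximum-principle argument on the spatial extrema of the support function $h(\cdot, t)$ and then transfer the bounds to $\rho$ through the elementary observation that an inclusion $cB \subseteq K_t \subseteq CB$ is simultaneously equivalent to $c \leq h_{K_t} \leq C$ and to $c \leq \rho_{K_t} \leq C$. Set $M(t) = \max_{x\in \mathbb{S}^{n-1}} h(x, t)$ and $m(t) = \min_{x\in \mathbb{S}^{n-1}} h(x, t)$. These are locally Lipschitz in $t$ by Hamilton's trick, and at a.e.\ $t$ they satisfy $M'(t) = \partial_t h(x_{\max}, t)$ and $m'(t) = \partial_t h(x_{\min}, t)$, where $x_{\max}$ and $x_{\min}$ are points at which the spatial extrema are attained.

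For the upper bound, at a spatial maximum $x_{\max}$ one has $\nabla h = 0$ and $\nabla^2 h \leq 0$. Hence $|\nabla h|^2 + h^2 = M^2$, and the eigenvalues of $\nabla^2 h + hI$ are each $\leq M$, so $\det(\nabla^2 h + hI) \leq M^{n-1}$ and $\mathcal{K} \geq M^{1-n}$. Substituting these into (\ref{5.2}) produces
\begin{align*}
M'(t) \;\leq\; -f(x_{\max})\, M^{p-q+1}\, e^{M^2/2} + M \;=\; M\bigl[\,1 - f(x_{\max})\, M^{p-q}\, e^{M^2/2}\,\bigr].
\end{align*}
Since $M^{p-q} e^{M^2/2} \to +\infty$ as $M \to +\infty$ regardless of the sign of $p-q$ (the Gaussian factor dominates any polynomial), and since $\min_{\mathbb{S}^{n-1}} f > 0$, there exists a constant $M_0$ depending only on $\min f$ such that $M'(t) < 0$ whenever $M(t) > M_0$. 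Therefore $M(t) \leq \max\{M(0), M_0\} =: C$ for all $t \in [0,T)$.

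Symmetrically, at $x_{\min}$ we have $\nabla^2 h \geq 0$, so the eigenvalues of $\nabla^2 h + hI$ are each $\geq m$, giving $\det(\nabla^2 h + hI) \geq m^{n-1}$ and $\mathcal{K} \leq m^{1-n}$. Substituting into (\ref{5.2}) yields the reversed inequality
\begin{align*}
m'(t) \;\geq\; m\bigl[\,1 - f(x_{\min})\, m^{p-q}\, e^{m^2/2}\,\bigr].
\end{align*}
Here the hypothesis $\max f < \liminf_{s \to 0^+} s^{q-p} e^{-s^2/2}$ built into (\ref{1.3}) is indispensable: it produces an $m_0 > 0$ such that $s^{q-p} e^{-s^2/2} > \max f$ for all $s \in (0, m_0]$, so the bracket is strictly positive and $m'(t) \geq 0$ whenever $m(t) \leq m_0$. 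Consequently $m(t) \geq \min\{m(0), m_0\} =: c > 0$. Passing from $c \leq h(\cdot, t) \leq C$ to the matching $\rho$-bounds is immediate from $cB \subseteq K_t \subseteq CB$, which translates to $c \leq \rho(\cdot,t) \leq C$.

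The main obstacle is the selection of the lower threshold $m_0$: its existence must cover every sign of $p - q$ uniformly, and this is precisely where the full strength of (\ref{1.3}) is consumed. The upper-bound step, in contrast, needs only the positivity of $f$ because the exponential factor $e^{-M^2/2}$ alone defeats any polynomial as $M \to \infty$; the corresponding condition $\limsup_{s\to+\infty} s^{q-p} e^{-s^2/2} < f$ in (\ref{1.3}) reduces to $f > 0$ and is therefore automatic.
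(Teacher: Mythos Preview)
Your proof is correct and takes essentially the same maximum-principle approach as the paper: evaluate (\ref{5.2}) at spatial extrema of $h(\cdot,t)$, use $\nabla h=0$ and $\pm\nabla^2 h\le 0$ there to obtain $\mathcal{K}\gtrless h^{1-n}$, and run the resulting scalar comparison to trap $M(t)$ and $m(t)$. Your added remark that the upper bound requires only $\min f>0$ (the $\limsup$ in (\ref{1.3}) being automatically $0$) is a valid sharpening that the paper leaves implicit by invoking (\ref{1.3}) symmetrically for both directions.
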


\begin{proof}
From the definitions of support and radial functions, we only need to prove one of (\ref{5.6}).

Suppose the spatial maximum of the $h(\cdot,t)$ is attained at $x_1\in\mathbb{S}^{n-1}$. Then, from (\ref{5.3}), at $x_1$
$$\nabla h=0, \ \nabla^2 h\leqslant0\ \ and \ \ h=\rho.$$
By (\ref{5.2}), one has
\begin{align*}
\partial_th&\leqslant-f(x)h^{p-q+1}e^{\frac{h^2}{2}}+h\\
&=h^{p-q+1}e^{\frac{h^2}{2}}
(h^{q-p}e^{-\frac{h^2}{2}}-f(x)).
\end{align*}
Taking $A=\limsup_{s\rightarrow+\infty}(s^{q-p}
e^{-\frac{s^2}{2}})$. By (\ref{1.3}), $\epsilon=\frac{1}{2}(\min_{\mathbb{S}^{n-1}}f(x)-A)$ is positive and there exists some positive constant $c_1$ such that
$$h^{q-p}e^{-\frac{h^2}{2}}<A+\epsilon \ for \ h>c_1.$$
This, together with (\ref{1.3}), yields
$$h^{q-p}e^{-\frac{h^2}{2}}-f(x)
<A+\epsilon-\min_{\mathbb{S}^{n-1}}f(x)<0,$$
which implies that at $x_1$
$$\partial_th<0.$$
Therefore
$$h\leqslant\max\{c_1,\max_{\mathbb{S}^{n-1}}h(x,0)\}.$$

Similarly, we can estimate the spatial minimum of $h(\cdot,t)$.
Suppose the spatial minimum of the $h(\cdot,t)$ is attained at $x_2\in\mathbb{S}^{n-1}$. Then, from (\ref{5.3}), at $x_2$
$$\nabla h=0, \ \nabla^2 h\geqslant0\ \ and \ \ h=\rho.$$
By (\ref{5.2}), one has
\begin{align*}
\partial_th&\geq-f(x)h^{p-q+1}e^{\frac{h^2}{2}}+h\\
&=h^{p-q+1}e^{\frac{h^2}{2}}
(h^{q-p}e^{-\frac{h^2}{2}}-f(x)).
\end{align*}
Taking $a=\liminf_{s\rightarrow0^+}(s^{q-p}
e^{-\frac{s^2}{2}})$. By (\ref{1.3}), $\epsilon=\frac{1}{2}(a-\max_{\mathbb{S}^{n-1}}f(x))$ is positive and there exists some positive constant $c_2$ such that
$$h^{q-p}e^{-\frac{h^2}{2}}>a-\epsilon \ for \ h<c_2.$$
This, together with (\ref{1.3}), yields
$$h^{q-p}e^{-\frac{h^2}{2}}-f(x)
>a-\epsilon-\max_{\mathbb{S}^{n-1}}f(x)>0,$$
which implies that at $x_2$
$$\partial_th>0.$$
Therefore
$$h\geqslant\min\{c_2,\min_{\mathbb{S}^{n-1}}h(x,0)\}.$$
The proof is completed.
\end{proof}

\begin{lemma}\label{l5.3}
Under the assumptions of Lemma \ref{l5.2}, then we have
\begin{align*}
|\nabla h(\cdot,t)|\leqslant C \  and \
|\nabla\rho(\cdot,t)|\leqslant C,  \  \forall
 (\cdot,t)\in \mathbb{S}^{n-1}\times [0,T),
\end{align*}
where $C$ is a positive constant as in Lemma \ref{l5.2}.
\end{lemma}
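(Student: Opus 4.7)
The plan is to derive both gradient bounds directly from the $C^0$ bounds of Lemma \ref{l5.2} together with the geometric identities of Section \ref{S2}; no parabolic/evolution argument is needed, since both estimates hold at each time $t$ purely from the geometry of the convex hypersurface $\Omega_t$, and the flow enters only through the uniform positive upper and lower bounds already established.

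First, for the bound on $|\nabla h|$, I would invoke the identity $|x|^2 = |\nabla h_K(v)|^2 + h_K(v)^2$ recorded in (\ref{2.5}), where $x = \nu_K^{-1}(v) = Dh_K(v)$ is the boundary point with outer unit normal $v$. Writing this boundary point in radial form $x = \rho(u)u$ for the corresponding radial direction $u \in \mathbb{S}^{n-1}$, the left-hand side becomes $\rho(u,t)^2$, so
\[
\rho(u,t)^2 = |\nabla h(v,t)|^2 + h(v,t)^2.
\]
Solving for $|\nabla h|^2$ and applying $c \leq h, \rho \leq C$ from Lemma \ref{l5.2} immediately gives $|\nabla h(v,t)|^2 \leq C^2 - c^2$.

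For the bound on $|\nabla \rho|$, I would apply the same identity to the polar body $(\Omega_t)^*$. By (\ref{2.2}) we have $h_{K^*} = 1/\rho_K$ and $\rho_{K^*} = 1/h_K$, and the normal/radial Gauss pairing for $K^*$ is precisely the pairing for $K$ with the roles of the two directions interchanged. Applied to $K^*$, the identity reads
\[
\frac{1}{h(v,t)^2} = \left|\nabla\frac{1}{\rho(u,t)}\right|^2 + \frac{1}{\rho(u,t)^2}.
\]
Using $\nabla(1/\rho) = -\rho^{-2}\nabla\rho$ and rearranging,
\[
|\nabla\rho(u,t)|^2 = \frac{\rho(u,t)^4}{h(v,t)^2} - \rho(u,t)^2 \leq \frac{C^4}{c^2} - c^2,
\]
again by Lemma \ref{l5.2}.

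There is no real obstacle here: once the $C^0$ bounds are in hand, both estimates follow from a single geometric identity plus polar duality. The only point worth verifying carefully is that the normal/radial pairing for $K^*$ is the swap of the one for $K$, which is a standard feature of the polar transform on smooth strictly convex bodies and is precisely what makes the identity for $\rho$ the polar dual of the one for $h$. After possibly enlarging the $C^0$ constant, one obtains the statement as written.
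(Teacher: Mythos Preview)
Your argument is correct and matches the paper's approach: the bound on $|\nabla h|$ follows immediately from $\rho^2 = |\nabla h|^2 + h^2$ together with the $C^0$ bounds of Lemma~\ref{l5.2}, exactly as in the paper. For $|\nabla\rho|$ the paper is even terser than you are, simply asserting the result from the same facts; your polar-duality derivation is a valid way to make explicit the dual identity $|\nabla\rho|^2 = \rho^4/h^2 - \rho^2$ (which one may equivalently obtain from the standard formula for the outer normal of a radially parametrized hypersurface).
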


\begin{proof}
By (\ref{2.5}) and (\ref{5.3}), we know that
$$\min_{\mathbb{S}^{n-1}}h(\cdot,t)\leqslant\rho(\cdot,t)\leqslant\max_{\mathbb{S}^{n-1}}h(\cdot,t),$$
and
$$\rho^2=h^2+|\nabla h|^2.$$
These facts together with Lemma \ref{l5.2} imply the result.
\end{proof}

\subsubsection{$C^2$-estimates}
In this subsection, we will establish the upper and lower bounds of the principal curvatures. These estimates can be obtained by considering proper auxiliary functions, see e.g., \cite{LL} for similar techniques.

\begin{lemma}\label{l5.4}
Let $h(\cdot,t)$ be a smooth solution of (\ref{5.2}). Assume that $f$ is a positive smooth function on $\mathbb{S}^{n-1}$ satisfying (\ref{1.3}). Then there exist positive constants c and C, independent of $t$, such that the principal curvature $\kappa_i$, $i=1,...,n-1$, of $\Omega_t$ satisfying
\begin{align}\label{5.7}
c\leqslant \kappa_i\leqslant C, \ \forall
 (\cdot,t)\in \mathbb{S}^{n-1}\times [0,T).
\end{align}
\end{lemma}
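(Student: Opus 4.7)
The goal is to produce two-sided uniform bounds on the eigenvalues $\lambda_i$ of the principal-radii matrix $b_{ij}:=\nabla_{ij}h+h\delta_{ij}$; since $\kappa_i=1/\lambda_i$, the estimate (\ref{5.7}) is equivalent to $c\le \lambda_i\le C$. The plan is to apply the standard auxiliary-function method (cf.~\cite{LL}) with the parabolic maximum principle, using Lemmas~\ref{l5.2}--\ref{l5.3} to absorb lower-order terms. Rewrite (\ref{5.2}) as $\partial_t h=-G+h$ with $G:=f(x)h^{p}e^{\rho^2/2}\rho^{n-q}\mathcal{K}$, and observe that $\mathcal{K}=1/\det b_{ij}$ gives $\partial G/\partial b_{ij}=-Gb^{ij}$, so the relevant parabolic operator is $\mathcal{L}:=\partial_t-Gb^{ij}\nabla_{ij}$. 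Differentiating (\ref{5.2}) twice covariantly yields
$$\partial_t b_{ij} = -\nabla_{ij}G + b_{ij} - G\,\delta_{ij},$$
which is the master identity for the auxiliary-function computations below.

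\textbf{Upper bound on $\lambda_{\max}$ (equivalently, lower bound on $\kappa_{\min}$).} Consider
$$W_1(x,t)=\log\lambda_{\max}(b_{ij}(x,t))+Bh(x,t)$$
for a constant $B>0$ to be chosen. At a spatial maximum point, select a local orthonormal frame diagonalizing $b_{ij}$ so that $\lambda_{\max}=b_{11}$, and replace $\log\lambda_{\max}$ by $\log b_{11}$; the latter is smooth, agrees with $W_1$ at the point, and dominates it nearby, so it suffices to apply $\mathcal{L}$ to $\log b_{11}+Bh$. Using the evolution of $b_{ij}$, the commutation of covariant derivatives on $\mathbb{S}^{n-1}$, and the identity $\nabla W_1=0$ to eliminate first derivatives of $b_{11}$ in favor of $\nabla h$, one arrives at a parabolic inequality; the $C^0$--$C^1$ bounds allow $B$ to be chosen so large that the remaining zeroth-order terms are strictly negative whenever $\lambda_{\max}$ exceeds a threshold, forcing $\lambda_{\max}\le C$ uniformly.

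\textbf{Upper bound on $\mathcal{K}$ (equivalently, lower bound on $\det b_{ij}$).} Consider
$$W_2(x,t)=\log\mathcal{K}(x,t)-Ah(x,t)^2/2$$
for a constant $A>0$. Compute $\mathcal{L}W_2$ using the evolution of $b_{ij}$. At an interior maximum, the identity $\nabla W_2=0$ expresses $\nabla\log\det b$ in terms of $h\nabla h$ and thereby absorbs the first-order terms; the $C^0$--$C^1$ estimates then let one pick $A$ sufficiently large that the zeroth-order part is negative unless $\log\mathcal{K}$ is bounded from above. Combined with $\lambda_{\max}\le C$ from the previous step, the lower bound $\det b_{ij}=\prod\lambda_i\ge 1/C$ gives $\lambda_i\ge(\det b)/\prod_{j\ne i}\lambda_j\ge c>0$, so $c\le \lambda_i\le C$, which is (\ref{5.7}).

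\textbf{Main obstacle.} The heart of the work is computational. Differentiating $G=fh^{p}e^{\rho^2/2}\rho^{n-q}/\det b$ twice produces many cross terms coming from $f$, from the Gaussian factor $e^{\rho^2/2}$, from the powers $h^p$ and $\rho^{n-q}$, and from $\det b$ itself; these contributions have to be grouped so that the ones of order $b_{11}^2$ (respectively, $(\log\det b)_i(\log\det b)_j$) cancel against or are dominated by the negative-definite term $-Gb^{kl}(\log b_{11})_k(\log b_{11})_l$ (respectively, the analogous term for $W_2$) produced by the concavity of $-\log\det$ on positive matrices. A secondary subtlety is that $\log\lambda_{\max}$ is only Lipschitz at points of eigenvalue coalescence, but freezing the eigenframe at the critical point as above bypasses this issue. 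The absence of any blow-up in the lower-order data, guaranteed by Lemmas~\ref{l5.2}--\ref{l5.3}, is essential for closing both estimates.
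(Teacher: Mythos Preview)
Your outline identifies the correct overall strategy (auxiliary functions plus the parabolic maximum principle, closing with the algebraic inequality $\lambda_{\min}\ge\det b/\lambda_{\max}^{\,n-2}$), and this matches the paper. However, the specific test functions you propose do not do the job, and this is where the actual content of the proof lies.

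For the bound on $\lambda_{\max}$ the paper uses $\mathcal{E}=\log\lambda_{\max}-a\log h-b|\nabla h|^2$, not your $W_1=\log\lambda_{\max}+Bh$. The gradient term $-b|\nabla h|^2$ is not decorative: when you differentiate $G=fh^{p}e^{\rho^{2}/2}\rho^{\,n-q}\mathcal{K}$ twice, the factor $e^{\rho^{2}/2}\rho^{\,n-q}$ (with $\rho^{2}=h^{2}+|\nabla h|^{2}$) spawns third derivatives $\sum_k h_k h_{k11}$ and a term $\sum_k h_{k1}^{2}=(b_{11}-h)^{2}$ that contributes $\pm C\,b_{11}$ with an uncontrolled sign for general $q$. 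The paper absorbs these via $-b\sum_i b^{ii}\nabla_{ii}|\nabla h|^{2}$, which after using (\ref{5.21}) produces the decisive negative term $-b\sum_i b_{ii}\le -b\,b_{11}$ in the final inequality. Your term $+Bh$ instead yields $\mathcal{L}(Bh)=B\big(h-nG+Gh\sum_i b^{ii}\big)$, whose contribution $+BGh\sum_i b^{ii}$ is positive and unbounded (you have no lower bound on the $\lambda_i$ at this stage), so the inequality cannot be closed. The claim that ``$B$ can be chosen so large that the remaining zeroth-order terms are strictly negative'' is exactly the step that fails.

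For the upper bound on $\mathcal{K}$ the paper does not use an auxiliary function of the form $\log\mathcal{K}-Ah^{2}/2$ at all; it uses the Chou--Wang quotient $\Theta=G/(h-\varepsilon_0)$ and the elementary inequality $\mathcal{H}\ge (n-1)\mathcal{K}^{1/(n-1)}$ to obtain an ODE-type differential inequality $\partial_t\Theta\le C\Theta^{2}-c\,\varepsilon_0\Theta^{(2n-1)/(n-1)}$, which immediately bounds $\Theta$. This avoids entirely the delicate cancellation of second derivatives that your $W_2$ would require; in particular your $W_2$ again runs into an uncontrolled $G\sum b^{ii}$ term. In short, the architecture is right but both test functions need to be replaced by the ones the paper (and \cite{LL}) actually uses, and the computations in (\ref{5.23})--(\ref{5.26}) are precisely the ``main obstacle'' you describe but do not carry out.
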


\begin{proof}
{\bf Step 1:} We need to prove the upper bound of Gauss curvature $\mathcal{K}$. It is essential to
construct the following auxiliary function
\begin{align}\label{5.8}
\Theta(x,t)=\frac{-\partial_th+h}{h-\varepsilon_0}
=\frac{f(x)h^p\rho^{n-q}e^{\frac{\rho^2}{2}}
\mathcal{K}}{h-\varepsilon_0},
\end{align}
where $\varepsilon_0$ is a positive constant satisfying $\varepsilon_0=\frac{1}{2}\min h(\cdot,t)$, $\forall(\cdot,t)\in \mathbb{S}^{n-1}\times[0,T)$.

From (\ref{5.8}), the upper bound of $\mathcal{K}$ follows from $\Theta(\cdot,t)$. Hence we only need to derive the upper bound of $\Theta(x,t)$.
Suppose the spatial maximum of $\Theta$ is obtained at $x_0$. Then
\begin{align}\label{5.9}
0=\nabla_i\Theta=\frac{-\partial_th_i+h_i}{h-\varepsilon_0}
+\frac{(\partial_th-h)h_i}{(h-\varepsilon_0)^2},
\end{align}
and using (\ref{5.9}), we get
\begin{align}\label{5.10}
0\geq\nabla_{ij}\Theta=\frac{-\partial_th_{ij}
+h_{ij}}{h-\varepsilon_0}+\frac{(\partial_th-h)
h_{ij}}{(h-\varepsilon_0)^2},
\end{align}
where $\nabla_{ij}\Theta\leq0$ should be understood in the sense of negative semi-definite matrix.

{As in the background metrial, namely Section \ref{S2}, we know the fact $b_{ij}=h_{ij}+h\delta_{ij}$, and $b^{ij}$ its inverse matrix.} This, together with (\ref{5.10}), yields
\begin{align*}
\partial_tb_{ij}&=\partial_th_{ij}+\partial_th\delta_{ij}\\
&\geq h_{ij}+\frac{(\partial_th-h)h_{ij}}{h-\varepsilon_0}
+\partial_th\delta_{ij}\\
&=b_{ij}+\frac{(\partial_th-h)h_{ij}}{h-\varepsilon_0}
+(\partial_th-h)\delta_{ij}\\
&=b_{ij}+\frac{\partial_th-h}{h-\varepsilon_0}(h_{ij}
+h\delta_{ij}-\varepsilon_0\delta_{ij})\\
&=b_{ij}-\Theta(b_{ij}-\varepsilon_0\delta_{ij}).
\end{align*}
By the definition of Gauss curvature, we obtain
\begin{align}\label{5.11}
\nonumber \partial_t\mathcal{K}&=-\mathcal{K}b^{ij}\partial_tb_{ij}\\
&\leq-\mathcal{K}b^{ij}[b_{ij}-\Theta(b_{ij}
-\varepsilon_0\delta_{ij})]\\
\nonumber &=-\mathcal{K}[(n-1)(1-\Theta)+\Theta\varepsilon_0\mathcal{H}],
\end{align}
where $\mathcal{H}$ denotes the mean curvature of $F(\cdot,t)$.

From (\ref{5.8}) and Lemma \ref{l5.2}, there exists a positive constant $c_1$ such that
\begin{align}\label{5.12}
\frac{1}{c_1}\Theta(\cdot,t)\leq \mathcal{K}(\cdot,t)\leq c_1\Theta(\cdot,t).
\end{align}
Noting
\begin{align*}
\frac{1}{n-1}\mathcal{H}\geq \mathcal{K}^{\frac{1}{n-1}},
\end{align*}
and combining (\ref{5.11}) and (\ref{5.12}), we obtain
\begin{align}\label{5.13}
\partial_t\mathcal{K}&\leq(n-1)c_1^{-1}\Theta^2
-(n-1)c_1^{-1}\varepsilon_0\Theta^{\frac{2n-1}{n-1}}.
\end{align}

From (\ref{5.8}), we have
\begin{align*}
\partial_t\Theta=f(x)\mathcal{K}\partial_t
\bigg(\frac{h^p\rho^{n-q}e^{\frac{\rho^2}{2}}}
{h-\varepsilon_0}\bigg)
+\frac{f(x)h^p\rho^{n-q}e^{\frac{\rho^2}{2}}}
{h-\varepsilon_0}\partial_t\mathcal{K},
\end{align*}
where
\begin{align*}
\partial_t\bigg(\frac{h^p\rho^{n-q}e^{\frac{\rho^2}{2}}}
{h-\varepsilon_0}\bigg)
&=\bigg[ph^{p-1}\rho^{n-q}e^{\frac{\rho^2}{2}}\partial_th
+(n-q)\rho^{n-q-1}h^pe^{\frac{\rho^2}{2}}\partial_t\rho\\
&\ \ \ +h^p\rho^{n-q+1}e^{\frac{\rho^2}{2}}
\partial_t\rho\bigg]\frac{1}{h-\varepsilon_0}
-\frac{h^p\rho^{n-q}e^{\frac{\rho^2}{2}}\partial_th}
{(h-\varepsilon_0)^2}\\
&=(\alpha+\rho^2)\frac{h^{p-1}\rho^{n-q}
e^{\frac{\rho^2}{2}}\partial_th}{h-\varepsilon_0}
-\frac{h^p\rho^{n-q}e^{\frac{\rho^2}{2}}\partial_th}
{(h-\varepsilon_0)^2}\\
&=\bigg[(\alpha+\rho^2)h^{p-1}\rho^{n-q}e^{\frac{\rho^2}{2}}
-\frac{h^p\rho^{n-q}e^{\frac{\rho^2}{2}}}
{h-\varepsilon_0}\bigg]
\frac{h-(h-\varepsilon_0)\Theta}{h-\varepsilon_0},
\end{align*}
where $\alpha=n+p-q$.
By (\ref{5.8}), (\ref{5.12}) and (\ref{5.13}), we have at $x_0$
\begin{align}\label{5.14}
\nonumber\partial_t\Theta&\leq c_1f \bigg[(\alpha+\rho^2)h^{p-1}\rho^{n-q}e^{\frac{\rho^2}{2}}
-\frac{h^p\rho^{n-q}e^{\frac{\rho^2}{2}}}
{h-\varepsilon_0}\bigg]
\frac{(h-(h-\varepsilon_0)\Theta)\Theta}{h-\varepsilon_0}\\
&\nonumber\ \ \ +\frac{(n-1)fh^p\rho^{n-q}
e^{\frac{\rho^2}{2}}}{c_1(h-\varepsilon_0)}
(\Theta^2-\varepsilon_0\Theta^{\frac{2n-1}{n-1}})\\
&=\frac{c_1fh}{h-\varepsilon_0}\bigg[(\alpha+\rho^2)
h^{p-1}\rho^{n-q}e^{\frac{\rho^2}{2}}
-\frac{h^p\rho^{n-q}e^{\frac{\rho^2}{2}}}
{h-\varepsilon_0}\bigg]\Theta\\
&\nonumber\ \ \ -c_1f\bigg[(\alpha+\rho^2)
h^{p-1}\rho^{n-q}e^{\frac{\rho^2}{2}}
-\frac{h^p\rho^{n-q}e^{\frac{\rho^2}{2}}}
{h-\varepsilon_0}\bigg]\Theta^2\\
&\nonumber\ \ \ +\frac{(n-1)fh^p\rho^{n-q}
e^{\frac{\rho^2}{2}}}{c_1(h-\varepsilon_0)}
(\Theta^2-\varepsilon_0\Theta^{\frac{2n-1}{n-1}}).
\end{align}
Then by the definition of $\varepsilon_0$, there exists positive constants $c_2, c_3$ and $c_4$, depending only on $\min_{\mathbb{S}^{n-1}\times[0,T)}h$, $\min_{\mathbb{S}^{n-1}\times[0,T)}\rho$,
$\min_{\mathbb{S}^{n-1}}f$ and $\varepsilon_0$, such that
$$c_1f\bigg[(\alpha+\rho^2)
h^{p-1}\rho^{n-q}e^{\frac{\rho^2}{2}}
-\frac{h^p\rho^{n-q}e^{\frac{\rho^2}{2}}}
{h-\varepsilon_0}\bigg]\leq c_2,$$

$$\frac{(n-1)fh^p\rho^{n-q}
e^{\frac{\rho^2}{2}}}{c_1(h-\varepsilon_0)}\leq c_3,\ \ and \ \ \frac{h}{h-\varepsilon_0}\leq c_4.$$
Therefore (\ref{5.14}) can be further estimated as
\begin{align*}
\partial_t\Theta&\leq c_2c_4\Theta-c_2\Theta^2
+c_3(\Theta^2-\varepsilon_0\Theta^{\frac{2n-1}{n-1}})\\
&=\Theta^2[c_2c_4\Theta^{-1}-(c_2-c_3)-c_3\varepsilon_0
\Theta^{\frac{1}{n-1}}].
\end{align*}
One can see that whenever $c_2\geq c_3$ and
$\Theta>(c_2c_3^{-1}\varepsilon_0^{-1})^{\frac{n-1}{n}}$, we have
\begin{align*}
\partial_t\Theta<0.
\end{align*}
This implies that $\Theta$ has a uniform upper bound.

For any $(x,t)$
$$\mathcal{K}(x,t)=\frac{(h-\varepsilon_0)\Theta(x,t)}
{fh^p\rho^{n-q}e^{\frac{\rho^2}{2}}}
\leq\frac{(h-\varepsilon_0)\Theta(x_0,t)}
{fh^p\rho^{n-q}e^{\frac{\rho^2}{2}}}\leq C.$$
Namely, $\mathcal{K}$ has a uniform upper bound.

{\bf Step 2:} Now we prove the lower bound of principal curvature. Consider the following auxiliary function
\begin{align}\label{5.15}
\mathcal{E}(x,t)=\log\lambda_{\max}(\{b_{ij}\})
-a\log h(x,t)-b|\nabla h|^2,
\end{align}
where $a,b$ are positive constants to be specified later, $\lambda_{\max}(\{b_{ij}\})$ is the maximal eigenvalue of $\{b_{ij}\}$. As showed in Section \ref{S2}, we know that the eigenvalue of $\{b_{ij}\}$ and $\{b^{ij}\}$ are respectively the principal radii and principal curvature of $\Omega_t$.

For any fixed $t\in[0,T)$, suppose that the maximum of $\mathcal{E}(x,t)$ is attained at $x_1\in\mathbb{S}^{n-1}$.
By a rotation of coordinates, we may assume that $\{b^{ij}(x_1,t)\}$ is diagonal, and $\lambda_{\max}(\{b_{ij}(x_1,t)\})=b_{11}(x_1,t)$.
Therefore, to derive a positive lower bound of principal curvatures, it is equivalent to estimate the upper bound of $b_{11}$.
Based on the above assumption, (\ref{5.15}) can be transformed into
\begin{align}\label{5.16}
\widetilde{\mathcal{E}}(x,t)=\log b_{11}
-a\log h(x,t)-b|\nabla h|^2.
\end{align}
Using the above assumption again, for any fixed $t\in[0,T)$, $\widetilde{\mathcal{E}}(x,t)$ has a local maximum at $x_1$, i.e.,
\begin{align}\label{5.17}
\nonumber0=\nabla_i\widetilde{\mathcal{E}}&=b^{11}\nabla_ib_{11}
-a\frac{h_i}{h}+2b\Sigma_jh_jh_{ji}\\
&=b^{11}\nabla_i(h_{11}+h\delta_{11})
-a\frac{h_i}{h}+2bh_ih_{ii},
\end{align}
and
\begin{align}\label{5.18}
0\geq\nabla_{ii}\widetilde{\mathcal{E}}
=b^{11}\nabla_{ii}b_{11}-(b^{11})^2
(\nabla_ib_{11})^2-a(\frac{h_{ii}}{h}-\frac{h_i^2}{h^2})
+2b(\Sigma_jh_jh_{jii}+h_{ii}^2).
\end{align}
Moreover
\begin{align}\label{5.19}
\nonumber\partial_t\widetilde{\mathcal{E}}
&=b^{11}\partial_tb_{11}-a\frac{h_t}{h}+2b\Sigma_jh_jh_{jt}\\
&=b^{11}(h_{11t}+h_t)-a\frac{h_t}{h}+2b\Sigma_jh_jh_{jt}.
\end{align}

From (\ref{5.8}), we write
\begin{align}\label{5.20}
\nonumber\log(h-h_t)&=\log(fh^p\rho^{n-q}
e^{\frac{\rho^2}{2}}\mathcal{K})\\
&=-\log\det(\nabla^2h+hI)+A(x,t),
\end{align}
where $A(x,t):=\log(fh^p\rho^{n-q}e^{\frac{\rho^2}{2}})$.

Differentiating (\ref{5.20}) with respect to $e_j$, we have
\begin{align}\label{5.21}
\frac{h_j-h_{jt}}{h-h_t}=-\sum_{i,k}
b^{ik}\nabla_jb_{ik}+\nabla_jA=-\sum_{i}b^{ii}
(h_{jii}+h_i\delta_{ij})+\nabla_jA,
\end{align}
and
\begin{align}\label{5.22}
\frac{h_{11}-h_{11t}}{h-h_t}-\frac{(h_1-h_{1t})^2}{(h-h_t)^2}
=-\sum_ib^{ii}\nabla_{11}b_{ii}+\sum_{i,k}b^{ii}b^{kk}
(\nabla_1b_{ik})^2+\nabla_{11}A.
\end{align}
By the Ricci identity on $\mathbb{S}^{n-1}$
$$\nabla_{11}b_{ij}=\nabla_{ij}b_{11}-\delta_{ij}b_{11}
+\delta_{11}b_{ij}-\delta_{1i}b_{1j}+\delta_{1j}b_{1i},$$
and (\ref{5.18}), (\ref{5.19}), (\ref{5.21}) and (\ref{5.22}), one has
\begin{align}\label{5.23}
\nonumber\frac{\partial_t\widetilde{\mathcal{E}}}{h-h_t}
&=b^{11}\bigg(\frac{h_{11t}-h_{11}+h_{11}+h-h+h_t}{h-h_t}\bigg)
-a\frac{h_t-h+h}{h(h-h_t)}+2b\frac{\Sigma_jh_jh_{jt}}{h-h_t}\\
\nonumber&=b^{11}\bigg(-\frac{(h_1-h_{1t})^2}{(h-h_t)^2}
+\sum_ib^{ii}\nabla_{11}b_{ii}-\sum_{i,k}b^{ii}b^{kk}
(\nabla_1b_{ik})^2-\nabla_{11}A\bigg)\\
\nonumber&\ \ \ +\frac{1}{h-h_t}-b^{11}+\frac{a}{h}
-\frac{a}{h-h_t}+2b\frac{\Sigma_jh_jh_{jt}}{h-h_t}\\
\nonumber&\leq b^{11}\bigg(\sum_ib^{ii}(\nabla_{ii}b_{11}
-b_{11}+b_{ii})-\sum_{i,k}b^{ii}b^{kk}(\nabla_1b_{ik})^2\bigg)\\
\nonumber&\ \ \ +\frac{1-a}{h-h_t}-b^{11}\nabla_{11}A
+\frac{a}{h}+2b\frac{\Sigma_jh_jh_{jt}}{h-h_t}\\
\nonumber&\leq\sum_ib^{ii}\bigg[(b^{11})^2(\nabla_ib_{11})^2
+a\bigg(\frac{h_{ii}}{h}-\frac{h_i^2}{h^2}\bigg)
-2b\bigg(\sum_jh_jh_{jii}+h_{ii}^2\bigg)\bigg]\\
&\ \ \ -b^{11}\sum_{i,k}b^{ii}b^{kk}(\nabla_1b_{ik})^2
-b^{11}\nabla_{11}A+\frac{a}{h}
+2b\frac{\Sigma_jh_jh_{jt}}{h-h_t}+\frac{1-a}{h-h_t}\\
\nonumber&\leq\sum_ib^{ii}a\bigg(\frac{h_{ii}+h-h}{h}
-\frac{h_i^2}{h^2}\bigg)+2b\sum_jh_j\bigg(-\sum_i
b^{ii}h_{jii}+\frac{h_{jt}}{h-h_t}\bigg)\\
\nonumber&\ \ \ -2b\sum_ib^{ii}h_{ii}^2
-b^{11}\nabla_{11}A+\frac{a}{h}+\frac{1-a}{h-h_t}\\
\nonumber&\leq-a\sum_ib^{ii}-2b\sum_ib^{ii}(b_{ii}^2-2b_{ii}h)
-b^{11}\nabla_{11}A+\frac{na}{h}+\frac{1-a}{h-h_t}\\
\nonumber&\ \ \ +2b\sum_jh_j\bigg(\frac{h_j}{h-h_t}
+\sum_ib^{ii}h_j-\nabla_jA\bigg)\\
\nonumber&\leq-b^{11}\nabla_{11}A-2b\sum_jh_j\nabla_jA
+(2b|\nabla h|^2-a)\sum_ib^{ii}-2b\sum_ib_{ii}\\
\nonumber&\ \ \ +\frac{2b|\nabla h|^2-a+1}{h-h_t}
+4b(n-1)h+\frac{na}{h}.
\end{align}

Next we calculate $-b^{11}\nabla_{11}A-2b\sum_jh_j\nabla_jA$. From the expression of $A(x,t)$, we have
\begin{align}\label{5.24}
\nabla_jA=\frac{f_j}{f}+\frac{ph_j}{h}
+\frac{(n-q+\rho^2)\rho_j}{\rho},
\end{align}
and
\begin{align}\label{5.25}
\nonumber\nabla_{11}A&=\frac{ff_{11}-f_1^2}{f^2}
+p\frac{hh_{11}-h_1^2}{h^2}\\
&\ \ \ +\frac{(n-q+\rho^2)\rho\rho_{11}
-(n-q-\rho^2)\rho_1^2}{\rho^2},
\end{align}
where
\begin{align*}
&\rho_i=\frac{hh_i+\sum h_kh_{ki}}{\rho}=\frac{h_ib_{ii}}{\rho},\\
&\rho_{ij}=\frac{hh_{ij}+h_ih_j+\sum h_kh_{kij}+\sum h_{ki}h_{kj}}{\rho}-\frac{h_ih_jb_{ii}b_{jj}}{\rho^3}.
\end{align*}
By (\ref{5.24}) and (\ref{5.25}), one has
\begin{align}\label{5.26}
\nonumber&-b^{11}\nabla_{11}A-2b\sum_jh_j\nabla_jA\\
\nonumber&=-b^{11}\frac{ff_{11}-f_1^2}{f^2}
-b^{11}p\frac{hh_{11}-h_1^2}{h^2}-b^{11}
\frac{(n-q+\rho^2)\rho_{11}}{\rho}\\
\nonumber&\ \ \ +b^{11}\frac{(n-q-\rho^2)\rho_1^2}{\rho^2}
-2b\sum_jh_j\frac{f_j}{f}
-2bp\sum_j\frac{h_j^2}{h}\\
\nonumber&\ \ \ -2b\sum_jh_j\frac{(n-q+\rho^2)\rho_j}{\rho}\\
&=-b^{11}\frac{ff_{11}-f_1^2}{f^2}-\frac{(n-1)p}{h}
+pb^{11}+b^{11}\frac{ph_1^2}{h^2}-b^{11}
\frac{(n-q+\rho^2)\rho_{11}}{\rho}\\
\nonumber&\ \ \ + (n-1)b_{11}\frac{(n-q-\rho^2)h_1^2}{\rho^4}
-2b\sum_jh_j\frac{f_j}{f}-2bp\sum_j\frac{h_j^2}{h}\\
\nonumber&\ \ \ -2b\sum_jh_j^2\frac{(n-q+\rho^2)b_{jj}}{\rho^2}\\
\nonumber&\leq C_1b^{11}+C_2b_{11}+C_3b+C_4
\end{align}
where $C_1, C_2, C_3$ and $C_4$ are positive constants, depending only on $\|f\|_{C^1(\mathbb{S}^{n-1})}$, $\|h\|_{C^1(\mathbb{S}^{n-1}\times[0,T))}$, $\min_{\mathbb{S}^{n-1}\times[0,T)}h$, $\min_{\mathbb{S}^{n-1}\times[0,T)}\rho$, and
$\min_{\mathbb{S}^{n-1}}f$.

Substituting (\ref{5.26}) into (\ref{5.23}), at $x_1$, we have
\begin{align*}
\frac{\partial_t\widetilde{\mathcal{E}}}{h-h_t}
&\leq C_1b^{11}+C_3b+C_4-(2b|\nabla h|^2-a)\sum_ib^{ii}\\
&\ \ \ -b\sum_ib_{ii}+4b(n-1)h+\frac{na}{h}<0
\end{align*}
provided $a<2b|\nabla h|^2$ and $b_{11}$ large enough.
Hence,
$$\mathcal{E}(x_1,t)=\widetilde{\mathcal{E}}(x_1,t)\leq C,$$
for some positive constant $C$, independent of $t$. The proof of Lemma \ref{l5.4} is completed.
\end{proof}

\subsection{Existence of smooth solution}
With the help {of the priori estimates} in subsection 5.2, we show that the flow (\ref{5.1}) exists for all time.

\begin{theorem}\label{t5.5}
Let {$p,q\in\mathbb{R}$}, and $f: \mathbb{S}^{n-1}\rightarrow(0,\infty)$ be a smooth, positive function satisfying
$$\limsup_{s\rightarrow+\infty}
(s^{q-p}e^{-\frac{s^2}{2}})<f<\liminf_{s\rightarrow0^+}
(s^{q-p}e^{-\frac{s^2}{2}}).$$
Then the flow (\ref{5.1}) has a smooth solution $\Omega_t=F(\mathbb{S}^{n-1},t)$ for all time $t>0$. Furthermore, a subsequence of $\Omega_t$ converges in $C^{\infty}$ to a smooth, closed, and uniformly convex hypersurface whose support function is smooth solution to the equation
\begin{align}\label{5.27}
e^{-\frac{|\nabla h|^2+h^2}{2}}h^{1-p}
(|\nabla h|^2+h^2)^{\frac{q-n}{2}}
\det(\nabla^2h+hI)=f.
\end{align}
\end{theorem}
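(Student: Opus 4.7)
The plan is to combine classical short-time existence for parabolic Monge--Amp\`ere equations with the uniform a priori estimates already established in Section 5.2 to obtain long-time existence, and then to extract a convergent subsequence using the monotone functional $\Phi(\Omega_t)$ from Lemma \ref{l5.1}.

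First, I would pass from the geometric flow (\ref{5.1}) to the scalar evolution equation (\ref{5.2}) for the support function $h(x,t)$. Since the initial hypersurface $\Omega_0$ is smooth and uniformly convex, $\det(\nabla^2 h_0+h_0 I)>0$, so the right-hand side of (\ref{5.2}) is a smooth, strictly parabolic nonlinear operator in $h$. Standard theory (as in the references cited in the introduction of the curvature-flow literature) yields a unique smooth solution on some maximal interval $[0,T^\ast)$.

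Next I would argue $T^\ast=\infty$. Lemma \ref{l5.2} gives uniform two-sided bounds on $h$ and $\rho$, Lemma \ref{l5.3} gives a uniform bound on $|\nabla h|$, and Lemma \ref{l5.4} gives uniform two-sided bounds on the eigenvalues of $b_{ij}=\nabla_{ij}h+h\delta_{ij}$ along $[0,T^\ast)$. In particular, the linearization of (\ref{5.2}) is uniformly parabolic with uniformly bounded coefficients. Applying Krylov--Safonov (or Evans--Krylov, since the operator $\log\det$ is concave) produces uniform $C^{2,\alpha}$ bounds on $h(\cdot,t)$; a standard Schauder bootstrap then yields uniform $C^{k,\alpha}$ estimates for every $k$. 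This excludes singularity formation as $t\to T^\ast$, so the smooth solution extends past any finite time and we may take $T^\ast=\infty$.

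For the convergence statement, I would invoke Lemma \ref{l5.1}. Since $\Phi(\Omega_t)$ is non-increasing along the flow and uniformly bounded below by the $C^0$ estimates of Lemma \ref{l5.2}, it converges to some $\Phi_\infty$ as $t\to\infty$ and
\[
\int_0^\infty\!\!\int_{\mathbb{S}^{n-1}}\frac{h\rho^{q-n}e^{-\rho^2/2}}{\mathcal{K}}\left(\frac{f}{h^{1-p}}\frac{\mathcal{K}}{\rho^{q-n}e^{-\rho^2/2}}-1\right)^{\!2}dx\,dt<\infty.
\]
Hence there exists a sequence $t_k\to\infty$ along which the spatial integrand tends to zero in $L^1(\mathbb{S}^{n-1})$. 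By the uniform $C^{k,\alpha}$ bounds and Arzel\`a--Ascoli, a subsequence of $h(\cdot,t_k)$ converges in $C^\infty(\mathbb{S}^{n-1})$ to a limit $h_\infty$, and the two-sided eigenvalue bounds on $b_{ij}$ pass to the limit, so $\Omega_\infty=\partial[h_\infty]$ is smooth and uniformly convex. Passing to the limit in the displayed inequality and applying the equality case of Lemma \ref{l5.1}, we conclude that $h_\infty$ solves the Monge--Amp\`ere equation (\ref{5.27}).

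The main obstacle I anticipate is the regularity upgrade: Lemma \ref{l5.4} controls only the principal curvatures, i.e.\ a $C^2$-type quantity, and one needs to pass from this pointwise eigenvalue control to genuine $C^{2,\alpha}$ (and higher) bounds to justify both long-time existence and the $C^\infty$-convergence of the subsequence. This step relies on the concavity of $\log\det$ together with Evans--Krylov type theory for fully nonlinear parabolic equations, followed by a standard Schauder bootstrap; the remaining arguments are then essentially variational and compactness-theoretic.
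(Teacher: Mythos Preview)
Your proposal is correct and follows essentially the same route as the paper: short-time existence, uniform $C^0$--$C^2$ estimates from Lemmas \ref{l5.2}--\ref{l5.4}, a Krylov/Evans--Krylov upgrade to $C^{2,\alpha}$ followed by Schauder bootstrap for long-time existence, and then the monotonicity of $\Phi$ together with Arzel\`a--Ascoli to extract a subsequential $C^\infty$ limit satisfying (\ref{5.27}). The ``obstacle'' you anticipate is precisely the step the paper handles by citing Krylov's theory, so there is no genuine gap.
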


\begin{proof}
From the $C^2$-estimates obtained in Lemma \ref{l5.4}, we know that Equation (\ref{5.1}) is uniformly parabolic on any finite time interval and has the short time existence.
By $C^0, C^1$ and $C^2$-estimates (Lemmas \ref{l5.2}, \ref{l5.3} and \ref{l5.4}), and the Krylov's theory \cite{K}, we get the H\"{o}lder continuity of $\nabla^2h$ and $\partial_th$.
Then we get the higher order derivation estimates by the regularity theory of the uniformly parabolic equations. Therefore, we obtain the long-time existence and regularity of the solution to Equation (\ref{5.1}).
Moreover, we have
\begin{align*}
\|h\|_{C_{x,t}^{i,j}(\mathbb{S}^{n-1}\times[0,T))}\leq C
\end{align*}
for some $C>0$, independent of $t$, and for each pairs of nonnegative integers $i, j$.

With the aid of the Arzel\`{a}-Ascoli theorem and a diagonal argument, there exists a sequence of $t$, denoted by $\{t_k\}_{k\in\mathbb{N}}\subset(0,\infty)$, and a smooth function $h(x)$ such that
\begin{align*}
\|h(x,t_k)-h(x)\|_{C^i(\mathbb{S}^{n-1})}\rightarrow0
\end{align*}
uniformly for any nonnegative integer $i$ as  $t_k\rightarrow\infty$.
This illustrates that $h(x)$ is a support function. Let $\Omega_{\infty}$ be a convex body determined by $h(x)$, we conclude that $\Omega_{\infty}$ is smooth and strictly convex with the origin in its interior.

In the following, we prove that Equation (\ref{5.27}) has a smooth solution.
From Lemma \ref{l5.1}, we see that
\begin{align}\label{5.28}
\partial_t\Phi(\Omega_t)\leq0.
\end{align}
If there exists a time $\tilde{t}$ such that
$$\partial_t\Phi(\Omega_t)\bigg|_{t=\tilde{t}}=0,$$
then by the equality condition in Lemma \ref{l5.1}, we have
$$e^{-\frac{|\nabla h|^2+h^2}{2}}h^{1-p}
(|\nabla h|^2+h^2)^{\frac{q-n}{2}}
\det(\nabla^2h+hI)=f,$$
namely, support function $h(x,\tilde{t})$ of $\Omega_{\tilde{t}}$ satisfies Equation (\ref{5.27}).

Next we verify the case of $\partial_t\Phi(\Omega_t)<0$.
From Lemma \ref{l5.1}, there exists a positive constant $C$, independent of $t$, such that
\begin{align}\label{5.29}
\Phi(\Omega_t)\leq C,
\end{align}
and  $\partial_t\Phi(\Omega_t)$ is uniformly continuous.
Combining (\ref{5.28}) and (\ref{5.29}), and applying the Fundamental Theorem of Calculus, we obtain
$$\int_0^t\Phi^\prime(\Omega_t)dt
=\Phi(\Omega_t)-\Phi(\Omega_0)
\leq\Phi(\Omega_t)\leq C,$$
which leads to
$$\int_0^\infty\Phi^\prime(\Omega_t)dt<C.$$
This implies that there exists a subsequence of time $t_j\rightarrow\infty$ such that
$$\lim_{t_j\rightarrow\infty}\partial_t\Phi(\Omega_{t_j})=0.$$

From the proof of Lemma \ref{l5.1}, we have
\begin{align*}
\partial_t\Phi(\Omega_t)\bigg|_{t=t_j}
=-\int_{\mathbb{S}^{n-1}}\frac{h\rho^{q-n}
e^{-\frac{\rho^2}{2}}}{\mathcal{K}}\bigg(\frac{f}{h^{1-p}}
\frac{\mathcal{K}}{\rho^{q-n}
e^{-\frac{\rho^2}{2}}}-1\bigg)^2dx\bigg|_{t=t_j}.
\end{align*}
Passing to the limit, we have
\begin{align*}
0&=\lim_{t_j\rightarrow\infty}\partial_t
\Phi(\Omega_t)\bigg|_{t=t_j}\\
&=-\int_{\mathbb{S}^{n-1}}\frac{h_{\infty}
\rho^{q-n}_{\infty}e^{-\frac{\rho^2_{\infty}}{2}}}
{\mathcal{K}_{\infty}}\bigg(\frac{f}{h_{\infty}^{1-p}}
\frac{\mathcal{K}_{\infty}}{\rho_{\infty}^{q-n}
e^{-\frac{\rho^2_{\infty}}{2}}}-1\bigg)^2dx\\
&\leqslant0,
\end{align*}
this means that
$$\frac{f}{h_{\infty}^{1-p}}\frac{\mathcal{K}_{\infty}}
{\rho_{\infty}^{q-n}e^{-\frac{\rho^2_{\infty}}{2}}}=1,$$
where $h_\infty$, $\rho_{\infty}$ and $\mathcal{K}_{\infty}$ are the support function, radial function and product of the principal curvature radii of the limit convex hypersurface $\Omega_\infty$, respectively.
The proof of Theorem \ref{t5.5} is completed.
\end{proof}

As an application, we have

\begin{corollary}\label{c5.6}
Under the assumptions of Theorem \ref{t5.5}, there exists a smooth solution to the $L_p$-Gauss dual Minkowski problem (\ref{1.2}) for $p,q\in\mathbb{R}$.
\end{corollary}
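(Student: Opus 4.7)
The proof plan is essentially an immediate application of Theorem \ref{t5.5}. The Monge--Amp\`ere type equation (\ref{5.27}) produced by the long-time limit of the Gauss curvature flow (\ref{5.1}) is, after identifying $h_K$ with the limit support function $h_\infty$, syntactically identical to the equation (\ref{1.2}) that governs the $L_p$-Gauss dual Minkowski problem. Thus the corollary follows once we recognise the smooth support function of the limit hypersurface $\Omega_\infty$ from Theorem \ref{t5.5} as the support function of the desired convex body $K$.

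In detail, the first step is to take $K := \Omega_\infty$, whose existence is guaranteed by Theorem \ref{t5.5} under the asymptotic pinching
\begin{align*}
\limsup_{s\rightarrow+\infty}(s^{q-p}e^{-\frac{s^2}{2}}) < f < \liminf_{s\rightarrow 0^+}(s^{q-p}e^{-\frac{s^2}{2}}).
\end{align*}
The second step is to verify $K \in \mathcal{K}_o^n$: the uniform positive bounds $c \leqslant h(\cdot,t) \leqslant C$ from Lemma \ref{l5.2} pass to the subsequential limit, so $h_\infty \geqslant c > 0$ on $\mathbb{S}^{n-1}$ and consequently the origin lies in the interior of $K$. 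Smoothness and strict convexity of $\partial K$ are delivered directly by Theorem \ref{t5.5} via the uniform two-sided bounds on the principal curvatures established in Lemma \ref{l5.4}, together with Krylov's regularity for uniformly parabolic equations.

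The third step is a direct translation of equation (\ref{5.27}) into (\ref{1.2}): setting $h_K = h_\infty$, the gradient, Hessian and determinant appearing in (\ref{5.27}) coincide with the corresponding quantities for $h_K$, so (\ref{5.27}) reads
\begin{align*}
e^{-\frac{|\nabla h_K|^2+h_K^2}{2}}h_K^{1-p}
(|\nabla h_K|^2+h_K^2)^{\frac{q-n}{2}}
\det(\nabla^2 h_K + h_K I) = f,
\end{align*}
which is exactly (\ref{1.2}). Hence $K$ is a smooth solution to the $L_p$-Gauss dual Minkowski problem.

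There is essentially no analytic obstacle here beyond bookkeeping: the genuinely hard work, namely short-time existence, the $C^0$, $C^1$, and $C^2$ a priori estimates (Lemmas \ref{l5.2}--\ref{l5.4}), higher-order regularity via Krylov's theory, and the monotonicity of the functional $\Phi(\Omega_t)$ from Lemma \ref{l5.1}, has already been carried out within the proof of Theorem \ref{t5.5}. The only point worth double-checking is that the entire range $p, q \in \mathbb{R}$ is genuinely accessible to the corollary, but this is precisely the generality in which Theorem \ref{t5.5} is stated, with the sign-independence absorbed into the asymptotic pinching condition (\ref{1.3}) on $f$.
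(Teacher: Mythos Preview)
Your proposal is correct and takes essentially the same approach as the paper: the corollary is an immediate consequence of Theorem \ref{t5.5}, since equation (\ref{5.27}) is syntactically identical to (\ref{1.2}) once the limit support function $h_\infty$ is identified with $h_K$. In fact, the paper states the corollary without proof (introducing it only with ``As an application, we have''), so your write-up supplies more detail than the original.
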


\section{Uniqueness of solution}\label{S6}
In this section, we provide a uniqueness result of the $L_p$-Gauss dual Minkowski problem under an appropriate condition, see e.g., \cite{CW,LL} for similar techniques.

\begin{theorem}\label{t6.1}
Let {$p, q\in\mathbb{R}$ with $q<p$}, and $f: \mathbb{S}^{n-1}\rightarrow(0,\infty)$ be a smooth, positive function on $\mathbb{S}^{n-1}$. Then the solution to the equation
\begin{align}\label{6.1}
e^{-\frac{|\nabla h|^2+h^2}{2}}h^{1-p}
(|\nabla h|^2+h^2)^{\frac{q-n}{2}}
\det(\nabla^2h+hI)=f
\end{align}
is unique.
\end{theorem}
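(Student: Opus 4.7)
The strategy is a straightforward maximum principle comparison applied to $\phi:=\log h_1-\log h_2$, where $h_1,h_2$ are two smooth positive solutions of \eqref{6.1} on $\mathbb{S}^{n-1}$. The goal is to show that $\phi$ is identically $0$. Because $\mathbb{S}^{n-1}$ is compact, $\phi$ attains a maximum at some point $x_0$; I will show the value of $\phi(x_0)$ must be $0$, and then apply the same argument to $-\phi$.

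At $x_0$ the first order condition $\nabla\phi(x_0)=0$ gives $\nabla h_1/h_1=\nabla h_2/h_2$ at $x_0$, so in particular
\[
|\nabla h_1|^{2}+h_1^{2}\;=\;\lambda^{2}\bigl(|\nabla h_2|^{2}+h_2^{2}\bigr)\qquad\text{at }x_0,
\]
where $\lambda:=h_1(x_0)/h_2(x_0)$. The second order condition $\nabla^{2}\phi(x_0)\le 0$ together with the first order identity reduces to $\nabla^{2}h_1/h_1\le\nabla^{2}h_2/h_2$ at $x_0$ (the gradient square terms cancel), which adding $\delta_{ij}$ translates to the matrix inequality
\[
b_{ij}(h_1)\;\le\;\lambda\,b_{ij}(h_2)\qquad\text{at }x_0,
\]
where $b_{ij}(h):=\nabla_{ij}h+h\delta_{ij}$ is positive definite on $\mathbb{S}^{n-1}$ (since the $h_k$ are support functions of convex bodies containing the origin in their interior). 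Taking determinants of $(n-1)\times(n-1)$ positive matrices yields $\det b(h_1)\le\lambda^{n-1}\det b(h_2)$ at $x_0$.

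Now evaluate \eqref{6.1} at $x_0$ for each $h_k$ and divide. Using the three identities above, the ratio $f(x_0)/f(x_0)=1$ becomes
\[
1\;\le\;\lambda^{(1-p)+(q-n)+(n-1)}\exp\!\Bigl(\tfrac{1}{2}(1-\lambda^{2})(|\nabla h_2|^{2}+h_2^{2})(x_0)\Bigr)
\;=\;\lambda^{q-p}\exp\!\Bigl(\tfrac{1}{2}(1-\lambda^{2})(|\nabla h_2|^{2}+h_2^{2})(x_0)\Bigr).
\]
Since $x_0$ is a maximum, $\lambda\ge 1$, so $1-\lambda^{2}\le 0$ and (using that $|\nabla h_2|^{2}+h_2^{2}>0$ because the origin is in the interior) the exponential factor is $\le 1$. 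The assumption $q<p$ gives $\lambda^{q-p}\le 1$. The product of two factors $\le 1$ can be $\ge 1$ only if both equal $1$; since $q-p<0$ strictly, $\lambda^{q-p}=1$ forces $\lambda=1$, i.e.\ $\phi(x_0)=0$, so $h_1\le h_2$ on $\mathbb{S}^{n-1}$. Exchanging the roles of $h_1$ and $h_2$ and repeating the argument at the maximum of $\log h_2-\log h_1$ gives $h_2\le h_1$, hence $h_1\equiv h_2$.

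I do not foresee a hard step here: the Hessian computation at $x_0$ is routine and the only genuine use of hypotheses is the sign $q-p<0$ which makes $\lambda^{q-p}\le 1$ compatible with $\lambda\ge 1$. The one point that deserves a line of care is the cancellation that reduces $\nabla^{2}\phi(x_0)\le 0$ to $\nabla^{2}h_1/h_1\le\nabla^{2}h_2/h_2$, which relies on the first order identity $\nabla h_1/h_1=\nabla h_2/h_2$ at $x_0$; apart from that, the uniqueness follows from the algebraic structure of \eqref{6.1} and the strict monotonicity of $\lambda\mapsto\lambda^{q-p}$ in the direction forced by $q<p$.
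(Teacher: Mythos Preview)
Your argument is correct and follows essentially the same maximum-principle comparison as the paper's proof: the paper works with $M=h_1/h_2$ rather than $\phi=\log(h_1/h_2)$, assumes $M(x_0)>1$ for contradiction, and arrives at the same inequality $1\le\lambda^{q-p}e^{(1-\lambda^2)\rho_2^2/2}$ to conclude. The only slip is the line ``since $x_0$ is a maximum, $\lambda\ge 1$,'' which is not a consequence of $x_0$ being a maximum; you should instead phrase it as the paper does---assume $\lambda>1$ and derive a contradiction (or note that if $\lambda\le 1$ there is nothing to prove for that direction).
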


\begin{proof}
Let $h_1$ and $h_2$ be two solutions to the equation (\ref{6.1}), and
$$M(x_0)=\max_{x\in\mathbb{S}^{n-1}}\frac{h_1(x)}{h_2(x)}
=\frac{h_1(x_0)}{h_2(x_0)},\ \ x_0\in\mathbb{S}^{n-1}.$$
We now prove $M(x_0)\leq1$ by contradiction. Suppose $M(x_0)>1$, then
\begin{align}\label{6.2}
h_1(x_0)>h_2(x_0).
\end{align}
At $x_0$, we have
\begin{align}\label{6.3}
0=\nabla M=\frac{h_2\nabla h_1-h_1\nabla h_2}{h_2^2}, \ \ i.e., \ \ \frac{\nabla h_1}{h_1}=\frac{\nabla h_2}{h_2},
\end{align}
and by (\ref{6.3}), one has
\begin{align}\label{6.4}
0\geq\nabla^2M=\frac{h_2\nabla^2 h_1-h_1\nabla^2h_2}{h_2^2}, \ \ i.e., \ \
\frac{\nabla^2 h_1}{h_1}-\frac{\nabla^2 h_2}{h_2}.
\end{align}

From the equation (\ref{6.1}), using (\ref{6.3}) and (\ref{6.4}), we have
\begin{align*}
1&=\frac{e^{-\frac{|\nabla h_2|^2+h_2^2}{2}}h_2^{1-p}
(|\nabla h_2|^2+h_2^2)^{\frac{q-n}{2}}
\det(\nabla^2h_2+h_2I)}
{e^{-\frac{|\nabla h_1|^2+h_1^2}{2}}h_1^{1-p}
(|\nabla h_1|^2+h_1^2)^{\frac{q-n}{2}}
\det(\nabla^2h_1+h_1I)}\\
&=\frac{e^{-\frac{(|\frac{\nabla h_2}{h_2}|^2+1)h_2^2}{2}}h_2^{1-p}[h_2^2
(|\frac{\nabla h_2}{h_2}|^2+1)]^{\frac{q-n}{2}}h_2^{n-1}
\det(\frac{\nabla^2h_2}{h_2}+I)}
{e^{-\frac{(|\frac{\nabla h_1}{h_1}|^2+1)h_1^2}{2}}h_1^{1-p}[h_1^2
(|\frac{\nabla h_1}{h_1}|^2+1)]^{\frac{q-n}{2}}h_1^{n-1}
\det(\frac{\nabla^2h_1}{h_1}+I)}\\
&=\frac{e^{-\frac{(|\frac{\nabla h_2}{h_2}|^2+1)h_2^2}{2}}h_2^{q-p}\det(\frac{\nabla^2h_2}{h_2}+I)}
{e^{-\frac{(|\frac{\nabla h_1}{h_1}|^2+1)h_1^2}{2}}h_1^{q-p}
\det(\frac{\nabla^2h_1}{h_1}+I)}\\
&\geq\frac{e^{-\frac{(|\frac{\nabla h_2}{h_2}|^2+1)h_2^2}{2}}h_2^{q-p}}
{e^{-\frac{(|\frac{\nabla h_1}{h_1}|^2+1)h_1^2}{2}}h_1^{q-p}}.
\end{align*}
Namely,
$$\bigg(\frac{e^{\frac{h_2^2}{2}}}{e^{\frac{h_1^2}{2}}}\bigg)
^{(|\frac{\nabla h_1}{h_1}|^2+1)}\geq \bigg(\frac{h_2}{h_1}\bigg)^{q-p}.$$
By (\ref{6.2}), and noting $q<p$, we have
$$\bigg(\frac{h_2}{h_1}\bigg)^{q-p}\geq1.$$
Hence
$$e^{\frac{h_2^2}{2}}\geq e^{\frac{h_1^2}{2}}.$$
That is, $h_2(x_0)\geq h_1(x_0)$. This is a contradiction to (\ref{6.2}). Thus $h_2(x)\geq h_1(x)$ for all $x\in\mathbb{S}^{n-1}$.

On the other hand, let
$$m(x_1)=\min_{x\in\mathbb{S}^{n-1}}\frac{h_1(x)}{h_2(x)}
=\frac{h_1(x_1)}{h_2(x_1)},\ \ x_1\in\mathbb{S}^{n-1}.$$
Applying the same argument as above, we can get
$$h_1(x)\geq h_2(x)$$
for all $x\in\mathbb{S}^{n-1}$. Therefore, we obtain
$$h_1(x)\equiv h_2(x)$$
for all $x\in\mathbb{S}^{n-1}$.
\end{proof}

\vskip 1.0cm

\end{document}